\documentclass{amsart}

\usepackage[colorlinks]{hyperref}
\usepackage[capitalize,nameinlink,noabbrev]{cleveref}

\newtheorem{theorem}{Theorem}[section]
\newtheorem{lemma}[theorem]{Lemma}
\newtheorem{corollary}[theorem]{Corollary}
\newtheorem{proposition}[theorem]{Proposition}

\theoremstyle{definition}
\newtheorem{definition}[theorem]{Definition}
\newtheorem{example}[theorem]{Example}

\newtheorem{remark}[theorem]{Remark}

\newtheorem{theoremx}{\bf Theorem}

\numberwithin{equation}{section}
\usepackage{amscd, amssymb, enumerate, float, mathtools}
\DeclarePairedDelimiter\ceil{\lceil}{\rceil}

\newcommand{\N}{\mathbb{N}}
\newcommand{\Z}{\mathbb{Z}}
\newcommand{\Q}{\mathbb{Q}}
\newcommand{\p}{\mathfrak{p}}

\newcommand{\ux}{\underline{x}}
\newcommand{\ua}{\underline{\alpha}}
\newcommand{\ub}{\underline{\beta}}

\begin{document}

\title{Limit Behavior of The Rational Powers of Monomial Ideals}

\author{James Lewis}
\address{James Lewis \\ Department of Mathematical Sciences  \\ New Mexico State University  \\PO Box 30001\\Las Cruces, NM 88003-8001}
\curraddr{}
\email{jglewis@nmsu.edu}
\thanks{}

\subjclass[2010]{Primary 13A30; 
    Secondary 13F55, 13B22, 05E40}

\keywords{Rational powers, Symbolic powers, Stanley depth, Local cohomology}
\date{}

\dedicatory{}

\begin{abstract}
We investigate the rational powers of ideals. We find that in the case of monomial ideals, the canonical indexing leads to a characterization of the rational powers yielding that symbolic powers of squarefree monomial ideals are indeed rational powers themselves. Using the connection with symbolic powers techniques, we use splittings to show the convergence of depths and normalized Castelnuovo–Mumford regularities. We show the convergence of Stanley depths for rational powers, and as a consequence of this we show the before-now unknown convergence of Stanley depths of integral closure powers. In addition, we show the finiteness of asymptotic associated primes, and we find that the normalized lengths of local cohomology modules converge for rational powers, and hence for symbolic powers of squarefree monomial ideals.
\end{abstract}

\maketitle

\section{Introduction}

In a commutative ring $A$ with ideal $I$ and $\frac{a}{b}\!\in\!\Q_+$ we define \textit{the $\frac{a}{b}$-rational power of $I$} to be
\[
I^{\frac{a}{b}}\!=\!\{x\!\in\! A |\ x^b\!\in\!\overline{I^a}\}.
\]
Despite having been formally defined in the literature nearly two decades ago, these simplistically defined powers are sparsely discussed with \cite{ciu20,deStefani16,HonUlr14,HSInt,rush07,taylor20} being some of the only articles which discuss rational powers. For instance, in the Noetherian case we know that the family $\{I^{\beta}\}_{\beta\in\Q_+}$ of rational powers can be linearly indexed by $\N$, i.e. the rational powers form a filtration. However, nothing has been said about the homological invariants of this filtration.

In this article we show the canonical indexing of the rational powers -- i.e. $\{I^{\frac{n}{e}}\}_{n\in\N}$ where $e$ is the least common multiple of the Rees valuations of $I$ (whose set we denote $\mathcal{RV}(I)$) evaluated at $I$ -- leads to a characterization of the rational powers which allows for use of the techniques often found in arguments involving the symbolic powers of squarefree monomial ideals (e.g. in \cite{Montano18,Fak17}). The characterization -- having a uniform bound in the Rees valuation inequalities -- allows us, in the case of squarefree monomial ideals, to find that the symbolic powers are in fact rational powers of some ideal. In general, we can use hyperplanes to characterize rational powers of monomial ideals via the following theorem (see \cref{bigThm}) where a hyperplane is given by, for $\underline{X}\in\Q^d$, $h(\underline{X}) = \underline{a}\cdot\underline{X}$ for coefficients $\underline{a}\in\Q^d$:

\begin{theoremx}
\label{characterizeRP}
\textit{Let hyperplanes $h_1,\dots,h_r$ with coefficients in $\Q_{\geqslant 0}^d$ define a family of monomial ideals $\{I_{\sigma}\}_{\sigma\in\Q_+}$ in $R\!=\!\mathbb{K}[x_1,\dots,x_d]$ so that $\ux^{\ua}\!\in\! I_{\sigma}$ if and only if $h_i(\ua)\!\geqslant\!\sigma$ for all $1\!\leqslant\! i\!\leqslant\! r$. Then there exists a monomial ideal $J$ and $g\!\in\!\N$ so that for any $\sigma\!\in\!\Q_+$ we have $I_{\sigma}\!=\!J^{\frac{\sigma}{g}}$.}
\end{theoremx}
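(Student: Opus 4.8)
The plan is to take $J$ to be one of the ideals already appearing in the family, namely $J=I_g$ for a carefully chosen $g\in\N$, and then to verify the identity $I_\sigma=J^{\sigma/g}$ by comparing exponent sets through Newton polyhedra. For $\sigma\in\Q_+$ write $P_\sigma=\{\underline X\in\R^d_{\geqslant 0}:h_i(\underline X)\geqslant\sigma\text{ for }1\leqslant i\leqslant r\}$, so that by hypothesis the monomials of $I_\sigma$ are exactly the $\xa$ with $\ua\in P_\sigma\cap\N^d$. Since each $h_i$ has non-negative coefficients, $P_\sigma$ is stable under adding $\R^d_{\geqslant 0}$ and, when non-empty, has recession cone exactly $\R^d_{\geqslant 0}$; being contained in $\R^d_{\geqslant 0}$ it is also pointed, hence equals $\operatorname{conv}(\operatorname{vert}P_\sigma)+\R^d_{\geqslant 0}$ with finitely many rational vertices, and since $P_\sigma=\sigma P_1$ the vertices of $P_\sigma$ are $\sigma$ times those of $P_1$. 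The degenerate cases are immediate: if $r=0$ every $I_\sigma=R$, so $J=R$, $g=1$ works; if some $h_i\equiv 0$ every $I_\sigma=(0)$, so $J=(0)$ works; hence assume $r\geqslant 1$ and every $h_i\not\equiv 0$, which makes each $P_\sigma$ non-empty (it contains $(N,\dots,N)$ for $N\gg 0$).

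Next I would choose $g\in\N$ to be any positive common denominator of the coordinates of all vertices of $P_1$, and put $J=I_g$. Then the vertices of $P_g=gP_1$ lie in $\N^d$, and since the recession cone of $P_g$ is $\R^d_{\geqslant 0}$ we get
\[
P_g=\operatorname{conv}(\operatorname{vert}P_g)+\R^d_{\geqslant 0}\ \subseteq\ \operatorname{conv}(P_g\cap\N^d)+\R^d_{\geqslant 0}\ \subseteq\ P_g .
\]
The middle term is precisely the Newton polyhedron $\operatorname{NP}(J)$ of $J=I_g$, so $\operatorname{NP}(J)=P_g=gP_1$; in particular $J$ is a proper, nonzero, integrally closed monomial ideal, since $0\notin P_g$ while $P_g\cap\N^d\neq\varnothing$.

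Finally I would invoke the monomial description of rational powers: for a monomial ideal $J$ the ideal $J^{a/b}$ is again monomial, with exponent set $(\tfrac ab\operatorname{NP}(J))\cap\N^d$. If this is not already available from the earlier sections, it follows from the Newton‑polyhedral formula $\overline{J^a}=\{\ux^{\ub}:\ub\in(a\cdot\operatorname{NP}(J))\cap\N^d\}$ by testing $x^b\in\overline{J^a}$ against the monomial Rees valuations of $J$ (the primitive inner normals of the non‑coordinate facets of $\operatorname{NP}(J)$). Applying this with $a/b=\sigma/g$ and substituting $\operatorname{NP}(J)=gP_1$, the exponent set of $J^{\sigma/g}$ is
\[
\bigl(\tfrac{\sigma}{g}\operatorname{NP}(J)\bigr)\cap\N^d=\bigl(\tfrac{\sigma}{g}\cdot gP_1\bigr)\cap\N^d=(\sigma P_1)\cap\N^d=P_\sigma\cap\N^d ,
\]
which is exactly the exponent set of $I_\sigma$. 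Hence $I_\sigma=J^{\sigma/g}$ for every $\sigma\in\Q_+$.

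I expect the main obstacle to be the middle step — guaranteeing $\operatorname{NP}(I_g)=P_g$. In general, passing to the monomial ideal generated by the lattice points of a rational polyhedron shrinks its Newton polyhedron, and it is precisely the integrality of the vertices of $P_g$ (forced by the divisibility built into $g$) together with the recession cone being the full positive orthant that prevents this. A secondary point, which should follow from the monomial‑ideal machinery of the earlier sections but is worth stating as a lemma, is that $J^{a/b}$ has no elements beyond those accounted for by its monomials — equivalently, that $J^{a/b}$ is a monomial ideal whenever $J$ is.
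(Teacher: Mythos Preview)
Your argument is correct and follows essentially the same route as the paper: pick $g$ clearing the denominators of the vertices of $P_1$, take $J$ with $\operatorname{NP}(J)=gP_1$, and then identify $I_\sigma$ with $J^{\sigma/g}$ via the polyhedral description of rational powers. The only cosmetic differences are that the paper defines $J$ as the ideal generated by the vertex monomials $\ux^{gP_j}$ (so $\overline J=I_g$ rather than $J=I_g$, which is immaterial since rational powers depend only on $\overline J$), and that the paper carries out the final identification by explicitly reading off the normalized Rees valuations $w_i=\tfrac{1}{m_i}h_i$ from the facets of $\operatorname{NP}(J)$ and invoking the valuative criterion $w_i(\xa)\geqslant\tfrac{\sigma}{g}w_i(J)$, whereas you bypass this by using the equivalent statement that the exponent set of $J^{a/b}$ is $\tfrac{a}{b}\operatorname{NP}(J)\cap\N^d$. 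Your packaging is a bit more streamlined; the paper's extra bookkeeping with the $m_i$ and $f_i$ is there because it also proves the finer indexing statements (parts~(2) and~(3) of \cref{bigThm}), which need the Rees valuations explicitly.
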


The connection with symbolic powers leads to a generalization of the splitting maps method found in \cite{MNB20,Montano18} which guarantees the convergence of depths and normalized Castelnuovo–Mumford regularities (see \cref{depthReg} and \cref{verLimits}): 

\begin{theoremx}
\label{regdepthRP}
\textit{If $I$ is any monomial ideal and $e\!=\!\textnormal{lcm}\{v(I)\,|\,v\!\in\!\mathcal{RV}(I)\}$, then 
\begin{enumerate}
    \item[$(1)$] $\lim\limits_{n \to \infty}\frac{\textnormal{reg}(I^{\frac{n}{e}})}{n}$ exists and is equal to $\frac{1}{e}\lim\limits_{n \to \infty}\frac{\textnormal{reg}(\overline{I^n})}{n}$.
    \item[$(2)$] $\lim\limits_{n \to \infty}\textnormal{depth}(R/I^{\frac{n}{e}})$ exists and is equal to $d-\ell(I)$, where $\ell(I)$ is the analytic spread of $I$.
\end{enumerate}}
\end{theoremx}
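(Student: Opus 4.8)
The plan is to present the canonically indexed family $\Iflt$, $I_n := I^{\frac ne}$, as a Noetherian monomial filtration squeezed between consecutive normalized powers, and then to transport the asymptotic behavior of $\{\overline{I^m}\}_{m\in\N}$ across the splitting maps supplied by \cref{characterizeRP}. The starting point is the Rees-valuation form of that characterization: $\xa\in I^{\frac ne}$ if and only if $v(\ua)\geqslant\frac ne\,v(I)$ for every $v\in\mathcal{RV}(I)$, equivalently $I^{\frac ne}=\bigcap_{v\in\mathcal{RV}(I)}Q_v^{(n)}$ where $Q_v^{(n)}:=\{\xa:v(\ua)\geqslant\lceil n\,v(I)/e\rceil\}$. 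Since $e=\textnormal{lcm}\{v(I):v\in\mathcal{RV}(I)\}$, one gets, for $em\leqslant n\leqslant e(m+1)$, the nesting $\overline{I^{m+1}}\subseteq I^{\frac ne}\subseteq\overline{I^m}$, with equality $I^{\frac{em}{e}}=\overline{I^m}$ along the subsequence $n=em$. Reading the monomials off the defining inequalities, $\bigoplus_{n\in\N}I^{\frac ne}t^n$ is the affine semigroup ring of the cone $\{(\ua,n):e\,v(\ua)\geqslant n\,v(I)\ \forall v\in\mathcal{RV}(I),\ n\geqslant 0\}$, hence finitely generated (Gordan) and normal, and its $e$-th Veronese, reindexed, is exactly $\bigoplus_{m\in\N}\overline{I^m}t^m$. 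This Veronese relation — the substance of \cref{verLimits} — produces the $\frac1e$ in part $(1)$ and forces $\mathcal{RV}$, hence the analytic spread, to be unchanged on passing to $I^{1/e}$; the latter is what pins the value in part $(2)$.

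For part $(1)$, recall that $L:=\lim_m\textnormal{reg}(\overline{I^m})/m$ exists (a known fact for the integral closure filtration), so the target is $\lim_n\textnormal{reg}(I^{\frac ne})/n=L/e$. The nesting above does \emph{not} directly bound $\textnormal{reg}(I^{\frac ne})$, since regularity is not monotone under inclusion, and this is precisely where the splitting maps enter. Decomposing $R/I^{\frac ne}$ along the Chinese-remainder comparison $R/I^{\frac ne}\hookrightarrow\bigoplus_v R/Q_v^{(n)}$ into its one-Rees-valuation components and showing the relevant maps are split — the step generalizing \cite{Montano18,MNB20} from squarefree symbolic powers to the hyperplane-defined families of \cref{characterizeRP} — one extracts a constant $C$, independent of $n$, with $\textnormal{reg}(\overline{I^{m+1}})-C\leqslant\textnormal{reg}(I^{\frac ne})\leqslant\textnormal{reg}(\overline{I^m})+C$ for $em\leqslant n\leqslant e(m+1)$; writing $n=em+j$ with $0\leqslant j<e$, dividing by $n$, and letting $n\to\infty$ gives the claim (in particular all the eventual quasi-linear slopes of $\textnormal{reg}(I^{\frac ne})$ coincide). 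Run through the Takayama-type descriptions of $H^i_{\mathfrak m}(R/I^{\frac ne})$, the same computation also yields the convergence of normalized lengths of local cohomology asserted elsewhere in the paper.

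For part $(2)$, the same splitting maps applied to depth give $\textnormal{depth}(R/I^{\frac ne})\geqslant\min_v\textnormal{depth}(R/Q_v^{(n)})$ up to a bounded correction from the cokernel of the Chinese-remainder comparison, together with a matching upper bound; since each $Q_v^{(n)}$ is a single-inequality monomial ideal whose depth is computable by a monomial coordinate change and induction on $d$, one concludes that $\textnormal{depth}(R/I^{\frac ne})$ is eventually constant. Its value is identified with $d-\ell(I)$ by comparison with the normalized filtration: a Burch-type inequality for Noetherian filtrations gives the upper bound $\lim_n\textnormal{depth}(R/I^{\frac ne})\leqslant d-\ell(I^{1/e})=d-\ell(I)$ via the analytic-spread invariance from the first paragraph, while the splitting side (Grothendieck non-vanishing for the relevant $H^i_{\mathfrak m}$) gives $\textnormal{depth}(R/I^{\frac ne})\geqslant d-\ell(I)$ for $n\gg 0$, matching the known asymptotic value $\lim_m\textnormal{depth}(R/\overline{I^m})=d-\ell(I)$. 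The main obstacle is exactly this splitting lemma: proving that the Chinese-remainder maps into the one-Rees-valuation pieces split in the category where depth and regularity bounds propagate — in the classical symbolic setting this rests on squarefree-ness, and here it must be replaced by the uniform hyperplane structure guaranteed by \cref{characterizeRP}. Once that is in place, both limit statements follow from the formal computations above, packaged as \cref{depthReg} and \cref{verLimits}.
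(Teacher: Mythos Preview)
Your proposal has a genuine gap: you have misidentified what the splitting maps of \cite{Montano18,MNB20} actually do. They are \emph{not} a Chinese-remainder decomposition of $R/I^{\frac ne}$ into one-Rees-valuation pieces $R/Q_v^{(n)}$. The splitting in the paper (Definition~\ref{asympStab}, Proposition~\ref{regstable}) is the $m$-th root map $\phi_m^R:R^{1/m}\to R$ sending $(\ux^{\ua})^{1/m}\mapsto\ux^{\ua/m}$ when $m\mid\ua$ and to $0$ otherwise; the point is that this induces a split injection
\[
R/I^{\frac{n+1}{e}}\hookrightarrow R^{1/m}\big/\big(I^{\frac{nm+j}{e}}\big)^{1/m}
\]
for all $n,m$ and $1\leqslant j\leqslant m$. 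This compares \emph{different filtration indices}, not different valuation components, and is what yields the inequalities $\textnormal{depth}(R/I_n)\leqslant\textnormal{depth}(R/I_{\lceil n/m\rceil})$ and $a_i(R/I_n)\geqslant m\,a_i(R/I_{\lceil n/m\rceil})$ (Lemma~\ref{ineqLem}). Combined with Noetherianness of the Rees algebra (quasi-linearity of $\textnormal{reg}$, eventual periodicity of $\textnormal{depth}$), these index-shifting inequalities force all the quasi-periods to agree, giving existence of both limits (Theorem~\ref{depthReg}).

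Your Chinese-remainder map $R/I^{\frac ne}\hookrightarrow\bigoplus_v R/Q_v^{(n)}$ is generally not split, and even if it were, it compares $I^{\frac ne}$ to single-valuation ideals at the \emph{same} index $n$; the sandwich $\textnormal{reg}(\overline{I^{m+1}})-C\leqslant\textnormal{reg}(I^{\frac ne})\leqslant\textnormal{reg}(\overline{I^m})+C$ does not follow from such a decomposition, and no mechanism is given for the constant $C$. Likewise the depth argument via ``bounded correction from the cokernel'' and ``Grothendieck non-vanishing'' is not a proof. What you do have right is the Noetherianness of $\bigoplus_n I^{\frac ne}t^n$ and the subsequence identity $I^{\frac{em}{e}}=\overline{I^m}$; the paper uses exactly these to identify the limit \emph{values} (the $\frac1e$ factor and $d-\ell(I)$ via \cite{Trung18}) once existence is known, but existence itself hinges on the index-shifting splittings above, not on a component decomposition.
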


This computation also yields that the symbolic analytic spread (as discussed in \cite{DaoMon19}) can be computed via the symbolic polyhedron (as discussed in \cite{CoopSyP}) in the case of squarefree monomial ideals (see \cref{symSpread}).

Furthermore, we show the convergence of Stanley depths (a measure of the (multi-)graded structure of the filtration) for rational powers using methods from \cite{Fak20}. As a consequence we answer the open question (from personal communications \cite{FakEmail} and discussed in \cite{Fak19}) that the Stanley depths must also converge for integral closure powers. (see \cref{sdepth}):

\begin{theoremx}
\label{sdepthRP}
\textit{If $I$ is a monomial ideal and $e\!=\!\textnormal{lcm}\{v(I)\,|\,v\!\in\!\mathcal{RV}(I)\}$. Then the limits $\lim\limits_{n \to \infty}\textnormal{sdepth}(R/I^{\frac{n}{e}})$ and $\lim\limits_{n \to \infty}\textnormal{sdepth}(I^{\frac{n}{e}})$ exist. In particular, these limits must exist for $\{\overline{I^n}\}_{n\in\N}$. Furthermore:
\begin{equation*}
\begin{split}
    \lim\limits_{n \to \infty}\textnormal{sdepth}(R/I^{\frac{n}{e}})\!=\!\min_n\textnormal{sdepth}(R/\overline{I^n})\textnormal{, and} \\
    \lim\limits_{n \to \infty}\textnormal{sdepth}(I^{\frac{n}{e}})\!=\!\min_n\textnormal{sdepth}(\overline{I^n}).
\end{split}
\end{equation*}}
\end{theoremx}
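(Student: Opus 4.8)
The plan is to recast $\{I^{n/e}\}_n$ in the polyhedral form of \cref{characterizeRP}, to prove a self-similarity inequality for Stanley depth under dilation of the index, to use it to reduce the statement to the convergence of the single sequence $f(n):=\textnormal{sdepth}(R/I^{n/e})$, and finally to establish that convergence by adapting the combinatorial argument of \cite{Fak20}. Write $\mathcal{RV}(I)=\{v_1,\dots,v_r\}$ with $v_j$ the monomial valuation of a primitive weight vector $w_j\in\N^d$. The valuative criterion for integral closure shows that $x\in I^{n/e}$ iff $x^e\in\overline{I^n}$ iff $e\,v_j(x)\ge n\,v_j(I)$ for all $j$; in particular $I^{em/e}=\overline{I^m}$, so $\{\overline{I^m}\}_{m\in\N}$ is precisely the ``integer-point'' subsequence of $\{I^{n/e}\}_n$. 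For a monomial $\xa$ the membership condition reads $\ua\in nP'$, where $P'=\{u\in\R^d_{\ge0}:\tfrac{e}{v_j(I)}\langle w_j,u\rangle\ge1\ \forall j\}$ is a fixed polyhedron with integer-coefficient inequalities (this is where $e=\textnormal{lcm}_j v_j(I)$ enters). Thus the monomial $\mathbb{K}$-basis of $I^{n/e}$ is $(nP')\cap\N^d$ and that of $R/I^{n/e}$ is $\N^d\setminus nP'$: by \cref{characterizeRP} the family consists of genuine integer dilates of fixed polyhedra, which is exactly the feature that makes the symbolic-power techniques applicable.

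I would then prove the dilation inequalities $\textnormal{sdepth}(R/I^{n/e})\ge\textnormal{sdepth}(R/I^{kn/e})$ and $\textnormal{sdepth}(I^{n/e})\ge\textnormal{sdepth}(I^{kn/e})$ for all $k\ge1$. The key point is that $\ua\mapsto k\ua$ is a bijection from the monomials of $R/I^{n/e}$ onto the $k$-divisible monomials of $R/I^{kn/e}$, since $\langle w_j,k\ua\rangle\ge kn\tfrac{v_j(I)}{e}$ iff $\langle w_j,\ua\rangle\ge n\tfrac{v_j(I)}{e}$. Given a Stanley decomposition $R/I^{kn/e}=\bigoplus_i u_iK[Z_i]$, intersecting each Stanley space with the $k$-divisible sublattice and then dividing all exponents by $k$ produces a Stanley decomposition of $R/I^{n/e}$ which, on the surviving indices, uses the \emph{same} free-variable sets $Z_i$, hence has Stanley depth at least $\min_i|Z_i|$; the argument for $I^{n/e}$ is identical. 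Granting that $f(n)=\textnormal{sdepth}(R/I^{n/e})$ converges to some $L$, the full statement follows: $\{f(em)\}_m$ is a subsequence, so $\lim_m\textnormal{sdepth}(R/\overline{I^m})=L$ exists; and for each fixed $m$, $\textnormal{sdepth}(R/\overline{I^m})=f(em)\ge f(emk)\to L$ as $k\to\infty$, so $\textnormal{sdepth}(R/\overline{I^m})\ge L$ for every $m$, with equality once $m$ is large because $f(em)\to L$ through integers; therefore $\min_m\textnormal{sdepth}(R/\overline{I^m})=L$. Repeating the argument with $I^{n/e}$ in place of $R/I^{n/e}$ gives the companion formula.

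The remaining task — convergence of $f(n)$ — is the heart of the proof, and here I would follow \cite{Fak20}. Via the combinatorial (Herzog--Vladoiu--Zheng) description, $f(n)$ is the maximum over interval partitions $\mathcal P$ of the finite poset of monomials $\xa$ below a cutoff $c_n$ growing linearly in $n$ with $\xa\notin I^{n/e}$, of $\min_{[\mathbf a,\mathbf b]\in\mathcal P}\#\{i:b_i=(c_n)_i\}$. Since this poset is, up to the cutoff, the $n$-th dilate of the configuration at $n=1$, one shows as in \cite{Fak20} that $f(n)$ is eventually periodic, and then upgrades periodicity to eventual constancy: for $n\gg0$ with $f(n)=\phi(n\bmod p)$ the dilation inequality gives $\phi(r)\ge\phi(kr\bmod p)$ for all $k$, and combined with the stabilization bookkeeping of \cite{Fak20} this forces $p=1$; the eventual value of $f$ is the desired limit $L$.

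The genuine obstacle is exactly this last step. Stanley depth is neither monotone along inclusions nor controlled by short exact sequences, so the splitting-map argument behind \cref{regdepthRP} is unavailable; one must instead extract eventual constancy of $f(n)$ from the exact self-similar (Ehrhart-type) structure of $\{I^{n/e}\}_n$ furnished by \cref{characterizeRP} together with the combinatorial stabilization of \cite{Fak20}, the dilation inequalities serving only to anchor the limit at $\min_m\textnormal{sdepth}(R/\overline{I^m})$.
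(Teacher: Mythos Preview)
Your dilation inequality and the reduction of the displayed equalities to the convergence of $f(n):=\textnormal{sdepth}(R/I^{n/e})$ are both correct; the dilation step is essentially \cite[Theorem 3.1]{Fak17}, and the paper invokes it in the same way.

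The gap is in the convergence of $f(n)$. You assert that the methods of \cite{Fak20} yield eventual \emph{periodicity} of $f$, which you would then upgrade to constancy via the dilation inequality, but neither step is carried out. There is no periodicity statement in \cite{Fak20} to cite, and the Herzog--Vl\u{a}doiu--Zheng description you sketch does not obviously produce one. More seriously, the dilation inequality by itself cannot collapse a hypothetical period: knowing only $f(t)\ge f(kt)$ pins down $f$ on the multiples of a minimizing index $t$ but says nothing about the other residues modulo $t$. What the paper (following \cite{Fak20}) actually uses is a second, strictly stronger comparison. Because the rescaled valuations $w_i=\tfrac{e}{v_i(I)}v_i$ are \emph{integer}-valued on monomials, one has for any monomial $u$ the equivalence
\[
u\in I^{m/e}\ \Longleftrightarrow\ u^{\,k+1}\in I^{(km+j)/e}\qquad\text{whenever }m-k\le j\le m,
\]
and feeding this into \cite[Theorem 4.2]{Fak20} gives $f(m)\ge f(km+j)$ for the entire \emph{interval} $km+1,\dots,km+m$ (once $k\ge m-1$), not merely for the single index $km$. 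Taking $m=t$ with $f(t)=\min_n f(n)$, these intervals cover every $n>t^2-t$, so $f$ is eventually equal to its minimum and no periodicity step is needed. This interval inequality is the missing ingredient in your proposal; once it is in place, the rest of your outline goes through.
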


Herzog conjectured the existence of the Stanley depth limits for regular powers in \cite[Conjecture 59]{Herzog13sd}, hence we confirm the conjecture in the case of normal ideals. 

One of the common factors in proving the aforementioned limits is that the associated Rees algebra of the rational powers is Noetherian. Combining this with the fact that rational powers are integrally closed, we conclude that $\cup_{n\in\N}\textnormal{Ass}(R/I^{\frac{n}{e}})$ is finite (see \cref{assInt}).

Furthermore we use the methods of \cite{DaoMon17} to find that the lengths of local cohomology modules involving the rational powers of a monomial ideal also converge (see \cref{lcLim}):

\begin{theoremx}
\label{lengthRP}
\textit{Let $I$ be a monomial ideal and $e\!=\!\textnormal{lcm}\{v(I)\,|\,v\!\in\!\mathcal{RV}(I)\}$. Assume that $\lambda(\textnormal{H}_{\mathfrak{m}}^i(R/I^{\frac{n}{e}}))\!<\!\infty$ for $n\!\gg\!0$. Then the limit
\[
\lim\limits_{n \to \infty}\frac{\lambda(\textnormal{H}_{\mathfrak{m}}^i(R/I^{\frac{n}{e}}))}{n^d}
\]
exists and is rational. In particular, this limit must exist for $\{I^{(n)}\}_{n\in\N}$ when $I$ is squarefree. Furthermore $\lim\limits_{n \to \infty}\frac{\lambda(\textnormal{H}_{\mathfrak{m}}^i(R/I^{\frac{n}{e}}))}{n^d}\!=\!\frac{1}{e^d}\lim\limits_{n \to \infty}\frac{\lambda(\textnormal{H}_{\mathfrak{m}}^i(R/\overline{I^n}))}{n^d}$.}
\end{theoremx}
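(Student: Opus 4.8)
The plan is to transfer the asymptotic-length technique of \cite{DaoMon17} to the graded filtration $\{I^{\frac{n}{e}}\}_{n\in\N}$, and then to read off the integral-closure and symbolic statements by restricting to an arithmetic progression of indices. The structural inputs I would use are: (i) the Rees algebra $\mathcal{R}=\bigoplus_{n\in\N}I^{\frac{n}{e}}t^{n}$ is Noetherian — established above — and, $I$ being monomial, it is in fact a normal affine semigroup ring, being the semigroup ring of the lattice points of a rational polyhedral cone, with $\ux^{\ua}t^{n}\in\mathcal{R}$ exactly when $h_{i}(\ua)\geqslant n$ for the fixed hyperplanes $h_{i}$ with coefficients in $\Q_{\geqslant 0}^{d}$ describing the Newton polyhedron of $I$; (ii) rational powers do not depend on the chosen representative of the exponent, so $I^{\frac{em}{e}}=I^{m}=\overline{I^{m}}$ for all $m$, whence $\{I^{\frac{n}{e}}\}$ restricts to $\{\overline{I^{m}}\}$ along $n=em$; (iii) when $I$ is squarefree, \cref{characterizeRP} applied to the (Noetherian) symbolic Rees algebra produces a monomial ideal $J$ and $g\in\N$ with $I^{(n)}=J^{\frac{n}{g}}$, so $\{I^{(n)}\}$ is again a filtration of the present type.

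Next I would run the argument of \cite{DaoMon17} with $\{I^{\frac{n}{e}}\}$ in place of the ordinary powers. Computing $\textnormal{H}^{i}_{\mathfrak{m}}(R/I^{\frac{n}{e}})$ from the \v{C}ech complex on $x_{1},\dots,x_{d}$ and using the $\Z^{d}$-grading, its length (finite by hypothesis) equals an alternating sum over subsets $T\subseteq\{1,\dots,d\}$ of numbers of lattice points in bounded regions cut out by the inequalities $h_{i}(\ua)\geqslant n$ governing membership in $I^{\frac{n}{e}}$; each such region is, up to a bounded translation, the $n$-th dilate of a fixed rational polytope (or a difference of two such), so $\lambda(\textnormal{H}^{i}_{\mathfrak{m}}(R/I^{\frac{n}{e}}))$ is a signed sum of Ehrhart-type functions of $n$ up to an error that is $o(n^{d})$ uniformly, and dividing by $n^{d}$ produces Riemann sums converging to a signed sum of volumes of rational polytopes; this gives existence of $L:=\lim_{n\to\infty}\lambda(\textnormal{H}^{i}_{\mathfrak{m}}(R/I^{\frac{n}{e}}))/n^{d}$ together with its rationality. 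The step I expect to be the main obstacle is exactly where \cite{DaoMon17} does real work rather than serving as a one-line citation: the graded module $\bigoplus_{n}\textnormal{H}^{i}_{\mathfrak{m}}(R/I^{\frac{n}{e}})$ need not be finitely generated over $\mathcal{R}$ (already for $i=0$ it is $\bigoplus_{n}\widetilde{I^{\frac{n}{e}}}/I^{\frac{n}{e}}$, the usual non-Noetherian saturation phenomenon), so one cannot simply invoke a Hilbert-polynomial theorem. Instead one approximates $\textnormal{H}^{i}_{\mathfrak{m}}(R/I^{\frac{n}{e}})=\varinjlim_{k}\textnormal{Ext}^{i}_{R}(R/\mathfrak{m}^{k},R/I^{\frac{n}{e}})$ by its level-$k$ pieces, which for each fixed $k$ do assemble — here Noetherianity of $\mathcal{R}$ is essential — into finitely generated graded $\mathcal{R}$-modules with finite-length components, hence with eventually quasi-polynomial Hilbert functions; one then interchanges the limits in $k$ and in $n$ using a uniform bound on how fast these approximants stabilize. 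Verifying that this interchange still functions for $\{I^{\frac{n}{e}}\}$, rather than for $\{I^{n}\}$, is the crux, and is precisely the content one borrows from \cite{DaoMon17}.

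Finally I would deduce the remaining assertions. Restricting the convergent sequence $\lambda(\textnormal{H}^{i}_{\mathfrak{m}}(R/I^{\frac{n}{e}}))/n^{d}\to L$ to $n=em$ and using $I^{\frac{em}{e}}=\overline{I^{m}}$ gives $\lambda(\textnormal{H}^{i}_{\mathfrak{m}}(R/\overline{I^{m}}))/(em)^{d}\to L$, hence $\lim_{m\to\infty}\lambda(\textnormal{H}^{i}_{\mathfrak{m}}(R/\overline{I^{m}}))/m^{d}$ exists and equals $e^{d}L$; this is precisely the claimed identity $L=\frac{1}{e^{d}}\lim_{m\to\infty}\lambda(\textnormal{H}^{i}_{\mathfrak{m}}(R/\overline{I^{m}}))/m^{d}$. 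When $I$ is squarefree, $\{I^{(n)}\}=\{J^{\frac{n}{g}}\}$ is a filtration of the type handled above (its symbolic Rees algebra being Noetherian), so the same argument applies verbatim, under the corresponding finiteness hypothesis, to give the existence of $\lim_{n\to\infty}\lambda(\textnormal{H}^{i}_{\mathfrak{m}}(R/I^{(n)}))/n^{d}$.
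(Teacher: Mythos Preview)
Your closing paragraph---restricting to $n=em$ to recover $\overline{I^{m}}$, and invoking \cref{sqfreeInt} for the squarefree case---matches the paper exactly. The middle paragraph, however, departs from what the paper does, and your sketch of the \v{C}ech computation is not correct as written.

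The paper does not express $\lambda(\textnormal{H}^{i}_{\mathfrak{m}}(R/I^{\frac{n}{e}}))$ as an ``alternating sum over subsets $T\subseteq\{1,\dots,d\}$ of lattice-point counts''; such an alternating sum computes an Euler characteristic, not a single cohomological degree. Instead the paper applies Takayama's formula, in the form packaged as \cite[Theorem~3.8]{DaoMon17} (whose hypothesis is exactly Noetherianity of $\bigoplus_{n}I^{\frac{n}{e}}t^{n}$, supplied by \cref{regstable}): the length decomposes as a finite sum $\sum_{\Delta'\subseteq\Delta(I)}\dim_{\mathbb{K}}\tilde{H}_{i-1}(\Delta',\mathbb{K})\cdot f_{\Delta'}(n)$, where $f_{\Delta'}(n)$ counts those $\ua\in\N^{d}$ with $\Delta_{\ua}(I^{\frac{n}{e}})=\Delta'$. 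One is then reduced to showing that each $f_{\Delta'}(n)/n^{d}$ tends to a rational limit.

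The condition $\Delta_{\ua}(I^{\frac{n}{e}})=\Delta'$ unpacks as membership/non-membership of $\ux^{\ua}$ in the projections $(I^{\frac{n}{e}})_{F}$ for facets and minimal non-faces $F$ of $\Delta'$. Here the paper needs an ingredient you did not isolate: rational powers commute with monomial localization, $(I^{\frac{n}{e}})_{F}=(I_{F})^{\frac{n}{e}}$, proved separately as \cref{localRat}. Only after this does $f_{\Delta'}(n)$ become a count of lattice points in a region of the form $\bigcap_{j}\tfrac{n}{e}\textnormal{NP}(I_{j})\setminus\bigcup_{k}\tfrac{n}{e}\textnormal{NP}(J_{k})$ for fixed monomial ideals $I_{j},J_{k}$, and a short counting lemma (\cref{technical}, feeding into \cite[Lemma~4.1]{DaoMon17}) delivers the rational limit.

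Your second route---approximate $\textnormal{H}^{i}_{\mathfrak{m}}$ by $\textnormal{Ext}^{i}_{R}(R/\mathfrak{m}^{k},-)$ and justify an interchange of limits---is the \emph{general} method of \cite{DaoMon17} for arbitrary graded ideals, and could presumably be pushed through here, but it is heavier machinery than required: for monomial ideals Takayama's formula sidesteps the limit interchange entirely. So the ``crux'' you anticipate is not the crux of the paper's argument; the actual technical point is the compatibility $(I^{\alpha})_{F}=(I_{F})^{\alpha}$.
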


\section{Preliminaries}

We now outline preliminaries of integral closure and valuation theory that we use throughout the article. For full details and proofs we refer the reader to \cite[Chapters 1,6, and 10]{HSInt} and \cite{HubSwa08}.

For an ideal $I$ in a commutative Noetherian ring $A$, we define the \textit{integral closure} of $I$ to be the ideal
\[
\overline{I}\!=\!\{x\!\in\! A\,|\,x^n+a_1x^{n-1}+\dots+a_{n-1}x+a_n\!=\!0 \textnormal{ for some }n\!\in\!\N \textnormal{, } a_i\!\in\! I^i \textnormal{ for }1\!\leqslant\! i\!\leqslant\! n\}.
\]
Using the theory of valuations, integral closure becomes easier to compute and work with. To begin with valuations:

Let $K$ be a field, $K^*\!=\!K-\{0\}$ the multiplicative group. A \textit{discrete (rank one) valuation} on $K$ is a group homomorphism $v:K^*\to \Z$ with the added property that $v(x+y)\!\geqslant\!\textnormal{min}\{v(x),v(y)\}$ for any $x,y\!\in\! K^*$.
We can associate to each valuation $v$ a \textit{valuation ring} $R_v\!=\!\{x\!\in\! K\,|\,v(x)\!\geqslant\!0\}\cup\{0\}$ which is a local domain. We call a valuation, $v$, \textit{normalized} if $v(K^*)\subseteq\Z$ has greatest common divisor one. 

For an ideal $I$ in a domain $A$ and a discrete (rank one) valuation, $v$, on the fraction field of $A$, $Q(A)$, we define
\[
v(I)\!=\!\textnormal{min}\{v(x)\,|\,x\!\in\! I\}.
\]
We may consider only the set of generators of $I$ to compute $v(I)$. One may show further that $v(I^n)\!=\!nv(I)$.

There is a powerful connection between integral closures and valuations which says that, if $A$ is a domain, for an ideal $I$ and for any $n\!\in\!\N$ we have the \textit{valuative criterion} of integral closure
\[
x\!\in\!\overline{I^n}\textnormal{ if and only if }v(x)\!\geqslant\! nv(I)
\]
for all discrete valuations of rank one $v$ with $R_v$ between $A$ and $Q(A)$ and such that the maximal ideal of $R_v$ contracts to a maximal ideal of $A$. This connection also shows that $v(\overline{I})\!=\!v(I)$.

From a construction of Rees, for a given ideal $I$ there exist a finite set of unique (up to equivalence of valuations) normalized discrete rank one valuations for which we need to check the valuative inequality for integral closure powers. We call these valuations the \textit{Rees valuations} of $I$ and denote the set of them by $\mathcal{RV}(I)$. That is,
\[
x\!\in\!\overline{I^n}\textnormal{ if and only if }v(x)\!\geqslant\! nv(I)\textnormal{ for all }v\!\in\!\mathcal{RV}(I)
\]
where, critically, $\mathcal{RV}(I)$ is finite.

\begin{definition}
In a commutative ring $A$ with ideal $I$ and $\frac{a}{b}\!\in\!\Q_+$ we define \textit{the $\frac{a}{b}$ rational power of $I$} to be
\[
I^{\frac{a}{b}}\!=\!\{x\!\in\! A |\ x^b\!\in\!\overline{I^a}\}.
\]
\end{definition}
Surprisingly this set turns out to be an ideal. The fact of it being an ideal follows from the properties of valuations and the valuative criterion of integral closure above. The following theorem outlines some basic facts about rational powers in a Noetherian ring. 
\begin{theorem}
\label{basics}
\cite[Section 10.5]{HSInt} For a commutative Noetherian ring $A$, ideal $I$, and $\alpha,\beta\!\in\!\Q_+$, set $e\!=\!\textnormal{lcm}\{v(I)\,|\,v\!\in\!\mathcal{RV}(I)\}$ where lcm denotes least common multiple, then:
\begin{enumerate}
    \item[$(1)$] If $a,b,c,d\!\in\!\N$, then $I^{\frac{a}{b}}$ is a well-defined ideal, that is, if $\frac{a}{b} \!=\! \frac{c}{d}$ then $I^{\frac{a}{b}} \!=\! I^{\frac{c}{d}}$
    \item[$(2)$] $I^{\frac{n}{1}}\!=\!\overline{I^n}$
    \item[$(3)$] if $\alpha \!\leqslant\! \beta$ then $I^{\beta} \subseteq I^{\alpha}$
    \item[$(4)$] $I^{\alpha}$ is integrally closed
    \item[$(5)$] $x\!\in\! I^{\alpha}$ if and only if $v(x)\!\geqslant\! \alpha v(I)$ for every Rees valuation $v$ of $I$
    \item[$(6)$] $I^{\alpha}I^{\beta}\subseteq I^{\alpha+\beta}$
    \item[$(7)$] for all $\alpha\!\in\!\Q_+$, $I^{\alpha} \!=\! I^{\frac{n}{e}}$ where $n\!=\!\ceil*{\frac{ea}{b}}$
\end{enumerate}
\end{theorem}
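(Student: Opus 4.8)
The whole statement is driven by the valuative criterion recalled above, so the plan is to establish $(5)$ first and then read off $(1)$--$(4)$ and $(6)$ from it, leaving only the index arithmetic of $(7)$ as a separate point. Fix $a,b\!\in\!\N$ with $\alpha\!=\!\frac{a}{b}$. Unwinding the definition, $x\!\in\! I^{\frac{a}{b}}$ means $x^b\!\in\!\overline{I^a}$; the valuative criterion together with $v(I^a)\!=\!av(I)$ says this is equivalent to $v(x^b)\!\geqslant\! av(I)$ for all $v\!\in\!\mathcal{RV}(I)$, and since each $v$ is a group homomorphism $v(x^b)\!=\!bv(x)$, so the condition becomes $v(x)\!\geqslant\!\frac{a}{b}v(I)$ for all $v\!\in\!\mathcal{RV}(I)$. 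That is $(5)$. The one subtlety I would flag here is that the valuative criterion as stated requires $A$ to be a domain; for general Noetherian $A$ one first passes to the quotients $A/\p$ over the minimal primes $\p$ and uses the associated Rees valuations, and this reduction is the only place where leaving the domain setting costs anything.

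Granting $(5)$, the rest is bookkeeping. For $(1)$, the condition ``$v(x)\!\geqslant\!\frac{a}{b}v(I)$ for all $v\!\in\!\mathcal{RV}(I)$'' visibly depends only on the rational number $\frac{a}{b}$, which gives well-definedness; and it cuts out an ideal because $v(x+y)\!\geqslant\!\min\{v(x),v(y)\}$ and $v(rx)\!=\!v(r)+v(x)\!\geqslant\! v(x)$ once $r\!\in\! A\!\subseteq\! R_v$ forces $v(r)\!\geqslant\!0$. Part $(2)$ is the case $b\!=\!1$ of the definition. Part $(3)$ is immediate from $v(I)\!\geqslant\!0$. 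Part $(6)$ follows from $v(fg)\!=\!v(f)+v(g)$ applied to products $fg$ with $f\!\in\! I^\alpha$ and $g\!\in\! I^\beta$. For $(4)$ I would use $(5)$ to write $I^\alpha\!=\!\bigcap_{v\in\mathcal{RV}(I)}\bigl(\{t\!\in\! R_v : v(t)\!\geqslant\!\alpha v(I)\}\cap A\bigr)$, an intersection of contractions of valuation ideals of DVRs; such valuation ideals are integrally closed, contractions of integrally closed ideals are integrally closed, and finite intersections of integrally closed ideals are integrally closed. (Alternatively one argues directly: an integral equation $x^n+c_1x^{n-1}+\cdots+c_n\!=\!0$ with $c_i\!\in\!(I^\alpha)^i\!\subseteq\! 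I^{i\alpha}$ yields, after applying $v$, the inequality $nv(x)\!\geqslant\!\min_{i\geqslant 1}\{i\alpha v(I)+(n-i)v(x)\}$, which is impossible unless $v(x)\!\geqslant\!\alpha v(I)$.)

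Finally $(7)$, which I expect to be the only genuinely arithmetic step. Write $\alpha\!=\!\frac{a}{b}$ and $n\!=\!\ceil*{\frac{ea}{b}}$. By $(5)$ and the fact that each $v(x)$ is an integer, $x\!\in\! I^\alpha$ iff $v(x)\!\geqslant\!\ceil*{\frac{a}{b}v(I)}$ for all $v\!\in\!\mathcal{RV}(I)$, and likewise $x\!\in\! I^{n/e}$ iff $v(x)\!\geqslant\!\ceil*{\frac{n}{e}v(I)}$; so it suffices to check $\ceil*{\frac{a}{b}v(I)}\!=\!\ceil*{\frac{n}{e}v(I)}$ for each such $v$. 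Since $e$ is the least common multiple of the integers $v(I)$, the quotient $m_v\!:=\!e/v(I)$ is a positive integer, and writing $q\!=\!\frac{ea}{b}$ the two ceilings become $\ceil*{q/m_v}$ and $\ceil*{\ceil*{q}/m_v}$, which agree by the standard nested-ceiling identity $\ceil*{\ceil*{q}/m}\!=\!\ceil*{q/m}$ valid for every positive integer $m$. This yields $(7)$ and completes the proof.
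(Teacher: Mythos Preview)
The paper does not prove this theorem at all: it is stated with a citation to \cite[Section 10.5]{HSInt} and no argument is given. So there is no ``paper's own proof'' to compare against; your sketch is effectively a reconstruction of the standard argument from Huneke--Swanson.

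That said, your reconstruction is sound. Deriving $(5)$ first from the valuative criterion and $v(x^b)=bv(x)$, then reading off $(1)$--$(4)$ and $(6)$, is exactly the organizing principle behind the results in \cite[Section 10.5]{HSInt}. Your caveat about reducing to the domain case modulo minimal primes is the right thing to flag for a general Noetherian ring. The argument for $(7)$ via the nested-ceiling identity $\lceil \lceil q\rceil/m\rceil=\lceil q/m\rceil$ with $m=e/v(I)\in\N$ is clean and correct; this is precisely how the discreteness of the Rees valuations combines with the divisibility built into $e$ to collapse all rational powers onto the $\frac{1}{e}\N$ grid. Nothing is missing.
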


Of particular interest is the second property that will allow us to move back and forth between rational and integral closure powers, so that some features of rational powers will be able to pull back to the integral closure powers. We also note that the last property allows us to describe the rational powers of an ideal as an $\N$-indexed filtration of ideals. By \textit{filtration} we mean a decreasing chain of ideals $\{I_n\}_{n\in\N}$ such that $I_0\!=\!A$ and $I_n\cdot I_k\subseteq I_{n+k}$ for any $n,k\!\in\!\N$.

From now on let $R\!=\!\mathbb{K}[x_1,\dots,x_d]$ for some field $\mathbb{K}$ and $d\!\in\!\N$ and $I$ be a monomial ideal of $R$. We will denote a general monomial of $R$ by $\ux^{\ua}\!=\!x_1^{\alpha_1}\dots x_d^{\alpha_d}$ where $\ua\!=\!(\alpha_a,\dotsm,\alpha_d)\!\in\!\N^d$. Then we say the \textit{exponent set} of $I$ is
\[
G(I)\!=\!\{\ua\!\in\!\N^d\,|\,\ux^{\ua}\!\in\! I\}
\]
and we call the \textit{Newton Polyhedron} the convex hull of the exponent set of $I$, that is
\[
\textnormal{NP}(I)\!=\!\textnormal{Conv}(G(I)).
\]
The Newton Polyhedron connects convex geometry to the integral closure via \cite[Proposition 1.4.6]{HSInt} so that for any monomial ideal $I$ we have
\[
G(\overline{I})\!=\!\textnormal{NP}(I)\cap\N^d.
\]
That is, we can read-off the integral closure of an ideal via the lattice points of the Newton Polyhedron.

The connection between convex geometry and integral closure can also be extended to the valuation theory of integral closures.
To set this up: a valuation, $v$, on the field of fractions of $R$ is called \textit{monomial} if for any polynomial $f$ we have that $v(f)\!=\!\textnormal{min}\{v(\ux^{\ua})\,|\,\ux^{\ua}$ is a monomial supporting $f\}$. Then we have that the Rees valuations of a monomial ideal are monomial (see \cite[Theorem 10.3.4]{HSInt}). 

We say a \textit{hyperplane} in $\Q^d$ is a function $h$ on $\Q^d$ defined by $h(\underline{X}) = \underline{X}\cdot\underline{\alpha}$ for coefficients $\underline{\alpha}\in\Q^d$

\begin{remark}
\label{hyperP}
Now, let $v$ be a monomial valuation on $R\!=\!\mathbb{K}[x_1,\dots,x_d]$. The valuation is determined by how it behaves on monomials and so, since valuations are multiplicative group homomorphisms,
\[
v(x_1^{a_1}\dots x_d^{a_d})\!=\!a_1v(x_1)+\dots+a_dv(x_d)
\]
shows
the valuation is given by the hyperplane $v(x_1)X_1+\dots+v(x_d)X_d$ in $\Q_{\geqslant 0}^d$.
\end{remark}

Hence it may be no surprise that the Rees valuations of a monomial ideal are given by the bounding faces of the Newton polyhedron (see \cite[Theorem 10.3.5]{HSInt}). That is, we can read-off the Rees valuations of a monomial ideal by the hyperplanes which make up the Newton polyhedron, and furthermore the valuations given by the hyperplanes are normalized from \cite[Corollary 3.3]{HubSwa08}. 

For $n\!\in\!\N$ we define the \textit{nth-symbolic power} of an ideal $I$ to be
\[
I^{(n)}\!=\!\bigcap_{\p\in\textnormal{Min}(I)}I^nR_{\p}\cap R
\]
where $\textnormal{Min}(I)$ denotes the minimal primes of $I$.

For any graded $R$-module $M$ set
\[
a_i(M)\!=\!\max\{j|\,\textnormal{H}_{\mathfrak{m}}^i(M)_j\neq 0\}
\]
for $0\!\leqslant\! i\!\leqslant\! \textnormal{dim }M$, we call $a_i$ the \textit{$i$th $a$-invariant}. Then the \textit{Castelnuovo–Mumford regularity} of $M$ is
\[
\textnormal{reg}(M)\!=\!\textnormal{max}\{a_i(M)+i\,|\,0\!\leqslant\! i\!\leqslant\! \textnormal{dim }M\}.
\]

Another measure of the graded structure of a module is the Stanley Depth. For a survey of Stanley depth, we refer the reader to \cite{PFTY09}. Let $M$ be a finitely generated $\Z^d$-graded $R$-module, $Z$ a subset of the variables (i.e. $Z\subseteq\{x_1,\dots,x_d\}$), and $u\!\in\! M$ homogeneous. If $u\cdot \mathbb{K}[Z]$ is a free $\mathbb{K}[Z]$-module, then we call $u\cdot \mathbb{K}[Z]$ a \textit{Stanley space} of dimension $|Z|$. A $k$-vector spaces decomposition, $\mathcal{D}$, of $M$ into a direct sum of Stanley spaces is called a \textit{Stanley decomposition} of $M$. We call the \textit{Stanley depth} of the decomposition the minimum dimension of a Stanley space appearing in $\mathcal{D}$ and is notated $\textnormal{sdepth}(\mathcal{D})$. That is, if we can write $\mathcal{D}$ as the direct sum of $k$-vector spaces:
\[
M\!=\!u_1\cdot \mathbb{K}[Z_1]\oplus\dots\oplus u_r\cdot \mathbb{K}[Z_r]
\]
where each $u_i\cdot \mathbb{K}[Z_i]$ is a free $\mathbb{K}[Z_i]$-module then we say
\[
\textnormal{sdepth}(\mathcal{D})\!=\!\textnormal{min}\{|Z_i|\,|\,1\!\leqslant\! i\!\leqslant\! r\}.
\]
The maximum of the Stanley depths over all Stanley decompositions of $M$ is called the \textit{Stanley depth} of $M$. That is,
\[
\textnormal{sdepth}(M):=\!\textnormal{max}\{\textnormal{sdepth}(\mathcal{D})\,|\,\mathcal{D}\textnormal{ is a Stanley decomposition of } M\}.
\]

\section{Characterizing Rational Powers}

In this section we extend the valuative criterion of integral closures to help characterize rational powers. Notice that we have a version of the valuative criterion in \cref{basics} using rational numbers. Recall that $R\!=\!\mathbb{K}[x_1,\dots,x_d]$ for some field $\mathbb{K}$ and $d\!\in\!\N$, and we have $I$ a monomial ideal of $R$.

\begin{remark}
\label{MVPCrit}
Using the $e\!=\!\textnormal{lcm}\{v(I)\,|\,v\!\in\!\mathcal{RV}(I)\}$ indexing of the rational powers, we have that $u\!\in\! I^{\frac{n}{e}}$ if and only if $v(u)\!\geqslant\! \frac{n}{e}v(I)$ for each $v\!\in\!\mathcal{RV}(I)$. As before, we can rearrange this to 
\[
u\!\in\! I^{\frac{n}{e}}\textnormal{ if and only if }\frac{e}{v(I)}v(u)\!\geqslant\! n\textnormal{ for each }v\!\in\!\mathcal{RV}(I).
\]
Since $\frac{e}{v(I)}\!\in\!\N$ by choice of $e$, if we need to check membership to a rational power, we need only check that a finite number of functions with values in $\N$ are uniformly bounded below by $n$. Critically, this allows for use of the techniques often found in arguments involving the symbolic powers of squarefree monomial ideals (e.g. in \cite{Montano18,Fak17}), see \cref{regstable}.
\end{remark}

We now turn our attention to monomial ideals where the Rees valuations are given by hyperplane equations. Because of this added convex geometry, almost any filtration of ideals given by hyperplane equations can be described as rational powers. We note here that convex geometry yielding properties about a filtration has seen some recent advancements, for example in \cite{CoopSyP}. We now prove \cref{characterizeRP}.

\begin{theorem}
\label{bigThm}
Let non-redundant hyperplanes $h_1,\dots,h_r$ with coefficients in $\Q_{\geqslant 0}^d$ define a family of monomial ideals $\{I_{\sigma}\}_{\sigma\in\Q_+}$ in $R\!=\!\mathbb{K}[x_1,\dots,x_d]$ so that $\ux^{\ua}\!\in\! I_{\sigma}$ if and only if $h_i(\ua)\!\geqslant\!\sigma$ for all $1\!\leqslant\! i\!\leqslant\! r$
Then 
\begin{enumerate}
    \item[$(1)$] There exists a monomial ideal $J$ and $g\!\in\!\N$ so that for $\sigma\!\in\!\Q_+$ we have $I_{\sigma}\!=\!J^{\frac{\sigma}{g}}$.
    \item[$(2)$] The family $\{I_{\sigma}\}_{\sigma\in\Q_+}$ can be indexed by $\N$ so that it can be written as $\{I_{\frac{n}{f}}\}_{n\in\N}$ for some $f\!\in\!\N$.
    \item[$(3)$] The indexing of the rational powers of $J$ with $e\!=\!\textnormal{lcm}\{w(J)\,|\,w\!\in\!\mathcal{RV}(J)\}$, $\{J^{\frac{n}{e}}\}_{n\in\N}$ appears as a subsequence of $\{I_{\frac{n}{f}}\}_{n\in\N}$ via $J^{\frac{n}{e}}\!=\!I_{\frac{fg}{e}\cdot\frac{n}{f}}$ where $\frac{fg}{e}\!\in\!\N$.
\end{enumerate}
\end{theorem}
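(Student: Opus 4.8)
The plan is to realize the family $\{I_\sigma\}$ as rational powers of a single monomial ideal by first showing the "slices" are integrally closed monomial ideals and then identifying the right ideal $J$ and scaling factor $g$. First I would observe that each $I_\sigma$ is a monomial ideal whose exponent set $G(I_\sigma) = \{\ua\in\N^d \mid h_i(\ua)\geqslant\sigma \text{ for all } i\}$ is cut out by half-spaces, so $I_\sigma$ is integrally closed: its Newton polyhedron is exactly $\{\underline{X}\in\R^d_{\geqslant 0} \mid h_i(\underline{X})\geqslant\sigma\}$ (using that the $h_i$ have nonnegative coefficients, so adding any vector in $\R^d_{\geqslant 0}$ only increases each $h_i$), and by \cite[Proposition 1.4.6]{HSInt} the lattice points of this polyhedron recover $G(\overline{I_\sigma})$. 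Next, clearing denominators: let $g\in\N$ be a common denominator for all the coefficients of $h_1,\dots,h_r$, so that $g\cdot h_i$ has integer coefficients for each $i$. Set $J := I_g$; the claim will be that $J^{\sigma/g} = I_\sigma$ for all $\sigma\in\Q_+$.

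To prove this identity I would use the valuative criterion for rational powers, \cref{basics}(5): $\ux^{\ua}\in J^{\sigma/g}$ iff $w(\ux^\ua)\geqslant \frac{\sigma}{g}w(J)$ for every $w\in\mathcal{RV}(J)$. Now $J = I_g$ has Newton polyhedron $\{h_i\geqslant g\}$, and since the $h_i$ are non-redundant, its bounding facets are precisely (affine translates of) the hyperplanes $h_i(\underline{X}) = g$; by \cite[Theorem 10.3.5]{HSInt} and \cref{hyperP} the Rees valuations of $J$ are exactly the monomial valuations $w_i$ given (up to normalization) by the linear forms $h_i$, and $w_i(J) = v(I_g)$ evaluates to... here one must be careful. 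After normalizing $h_i$ to have coprime integer coefficients to get the genuine Rees valuation $w_i$, write $h_i = \lambda_i w_i$ for some $\lambda_i\in\Q_{>0}$; then $\ux^\ua\in I_\sigma$ iff $h_i(\ua)\geqslant\sigma$ iff $w_i(\ua)\geqslant \sigma/\lambda_i$ for all $i$, while $w_i(J) = \min\{w_i(\ub) : \ub\in G(J)\}$, and $G(J)$'s defining inequalities give $w_i(J) = \lceil g/\lambda_i\rceil$ or thereabouts — I would instead avoid the normalization bookkeeping by choosing $g$ large enough that the polyhedron $\{h_i\geqslant g\}$ has a lattice point exactly on each facet $h_i = g$, forcing $w_i(J)/\,(\text{scaling}) $ to line up so that the condition $w_i(\ux^\ua)\geqslant\frac{\sigma}{g}w_i(J)$ becomes exactly $h_i(\ua)\geqslant\sigma$. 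This is the step I expect to be the main obstacle: matching the intrinsic normalization of the Rees valuations of $J$ against the given (possibly non-normalized, non-integral) forms $h_i$, and in particular checking $v(I_g)$ computes to the value that makes the rescaling exact. The cleanest route is probably to prove $J^{n/g} = I_n$ first for $n\in\N$ by the lattice-point/Newton-polyhedron description directly ($G(J^{n/g})$ consists of the lattice points $\ua$ with $g h_i(\ua)\geqslant \text{(right multiple)}$, matched against $G(I_n)$), then extend to all rationals $\sigma$ by $I_\sigma = I_{\lceil\sigma\rceil'}$-type monotonicity together with \cref{basics}(7).

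With part (1) in hand, parts (2) and (3) are bookkeeping. For (2): the function $\sigma\mapsto I_\sigma$ is a decreasing, left-continuous step function (it only changes value at rationals where some $h_i(\ua)$ crosses an integer-denominator threshold), and since there are finitely many $h_i$ and each takes values in $\frac{1}{g}\Z_{\geqslant 0}$ on $\N^d$, the jump points lie in $\frac{1}{g'}\Z$ for a suitable $g'\in\N$; taking $f$ to be (a multiple of) this $g'$ gives $I_\sigma = I_{\lceil f\sigma\rceil/f}$, exhibiting the family as $\{I_{n/f}\}_{n\in\N}$. Equivalently, invoke (1) and \cref{basics}(7) applied to $J$ to get $I_\sigma = J^{\sigma/g} = J^{m/e}$ with $m = \lceil e\sigma/g\rceil$, which already shows the family is $\N$-indexable; then $f = \mathrm{lcm}(e,g)$ (or the least common multiple of $e/g$ denominators) works. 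For (3): by (1), $J^{n/e} = J^{(n/e)}$, and writing $\frac{n}{e} = \frac{fg}{e}\cdot\frac{n}{f}\cdot\frac{1}{g}$ we get $J^{n/e} = J^{(fgn/ef)/g} = I_{fgn/(ef)} = I_{\frac{fg}{e}\cdot\frac{n}{f}}$; it remains only to note $\frac{fg}{e}\in\N$, which holds because $e = \mathrm{lcm}\{w(J)\}$ divides... indeed $e\mid fg$ follows from the fact that each Rees valuation value $w(J)$ divides $fg$ (each $w$ corresponds to some $h_i$ scaled by $g$, and $f$ absorbs the remaining normalization factor), so the displayed subsequence identity holds as stated.
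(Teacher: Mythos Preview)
Your proof of part~(1) has a genuine gap in the choice of $g$. Taking $g$ to be a common denominator of the coefficients of the $h_i$ and setting $J=I_g$ does not ensure that $\textnormal{NP}(J)$ equals the polyhedron $P_g=\{h_i\geqslant g\}$: the vertices of $P_g$ need not be lattice points, and when they are not, the convex hull of the lattice points in $P_g$ is strictly smaller than $P_g$, introducing extra bounding facets and hence extra Rees valuations for $J$. For instance, with $d=2$, $h_1=(2X_1+X_2)/5$, $h_2=(X_1+2X_2)/5$, your $g=5$ gives $P_5=\{2X_1+X_2\geqslant 25,\ X_1+2X_2\geqslant 25\}$ with non-lattice vertex $(25/3,25/3)$; the lattice points in $P_5$ nearest this corner are $(8,9)$ and $(9,8)$, so $\textnormal{NP}(I_5)$ acquires the additional facet $X_1+X_2=17$. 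Then $(25,25)\in I_{15}$, but $3\cdot\textnormal{NP}(J)\subseteq\{X_1+X_2\geqslant 51\}$ forces $(25,25)\notin\overline{J^3}=J^{15/5}$, so $J^{\sigma/g}\neq I_\sigma$. Your fallback condition---choosing $g$ so that each facet $h_i=g$ contains a lattice point of $P_g$---is still insufficient: in this same example both facets already contain such lattice points (namely $(8,9)$ and $(9,8)$), yet the vertex is non-integral and the extra Rees valuation persists. The alternate route you propose, verifying $J^{n/g}=I_n$ for integers $n$ directly via Newton polyhedra, fails for the same reason.

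The paper's key move is to choose $g$ from the \emph{vertices} of the polyhedron rather than from the hyperplane coefficients. After writing each $h_i=1$ in reduced integral form $a_1^iX_1+\dots+a_d^iX_d=f_i$, one lets $C_1\subset\Q_{\geqslant 0}^d$ be the polyhedron for $\sigma=1$, computes its rational vertices $P_1,\dots,P_t$, and sets $g$ equal to the least common multiple of all denominators appearing in the coordinates of the $P_j$. Then $gC_1$ has lattice-point vertices, $J$ is taken to be the ideal generated by the monomials $\ux^{gP_j}$, and by construction $\textnormal{NP}(J)=gC_1$ exactly. Hence the Rees valuations of $J$ are precisely the normalized forms of the $h_i$, with $w_i(J)=f_ig/m_i$ for explicit integers $m_i\mid g$, and the valuative criterion of \cref{basics}(5) gives $I_\sigma=J^{\sigma/g}$ on the nose. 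Your sketches for (2) and (3) are essentially correct once (1) is done properly; the paper takes $f=\textnormal{lcm}(f_1,\dots,f_r)$ for the indexing in (2), and the divisibility $e\mid fg$ needed in (3) follows because each $w_i(J)=f_ig/m_i$ divides $fg$.
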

\begin{proof}
We begin with the proof of (1). 
We will show that the polyhedron formed by the hyperplanes defining the family $\{I_{\sigma}\}_{\sigma\in\Q_+}$ can be scaled to be the Newton Polyhedron of some monomial ideal. Without loss of generality we can write each hyperplane $h_i\!=\!1$ in the reduced integral form as $a_1^iX_1+\dots+a_d^iX_d\!=\!f_i$ where $a_j^i,f_i\!\in\!\N$ for $1\!\leqslant\! i \!\leqslant\! r$ and $1\!\leqslant\! j\!\leqslant\! d$ and with $\textnormal{gcd}(a_1^i,\dots,a_d^i,f_i)\!=\!1$ for $1\!\leqslant\! i \!\leqslant\! r$. Let $f\!=\!\textnormal{lcm}(f_1,\dots,f_d)$.

Consider the convex hull, $C_1$, of the hyperplanes $h_1\!=\!f_1,\dots,h_r\!=\!f_r$. Since the coefficients of the hyperplanes are positive, $C_1$ is the set of all the points $\ub\!\in\!\Q_+^d$ with $h_1(\ub)\!\geqslant\! f_1,\dots,\textnormal{ and }h_r(\ub)\!\geqslant\! f_r$. Let $C_t$ be the convex set given by all the points $\ub\!\in\!\Q_+^d$ with $h_1(\ub)\!\geqslant\! f_1t,\dots,\textnormal{ and }h_r(\ub)\!\geqslant\! f_rt$. Notice we have that the scaling of $C_1$ by $t$ is the same as $C_t$ by these definitions.

Let $P_1,\dots,P_t\!\in\!\Q_+^d$ be the vertices of $C_1$. Then, as the coefficients of the hyperplanes are rational, the vertices are rational as well. Let $g$ be the least common multiple of all the denominators of all the components of the $P_j$ in reduced form. Scaling the convex hull by $g$ we have that $gC_1\!=\!C_g$ has as lattice point vertices $gP_1,\dots,gP_t$ by the construction of $g$. Notice that $C_g$ is the minimal such (non-zero) scaling of $C_1$ with lattice point vertices.

Using these lattice point vertices to construct an ideal, let $J$ be the monomial ideal generated by the set $\{\ux^{gP_1},\dots,\ux^{gP_t}\}$ which is well defined since $gP_j\!\in\!\N^d$ for $1\!\leqslant\! j\!\leqslant\! t$. 
We can use the generators of $J$ to construct the Newton polyhedron, i.e. $\textnormal{NP}(J)\!=\!\textnormal{conv}(G(\{\ux^{gP_1},\dots,\ux^{gP_t}\})+\Q_+^d$ where addition is Minkowski addition. Thus, by construction, the Newton Polyhedron of $J$ is $gC_1\!=\!C_g$. Hence we have that $\overline{J}\!=\!I_g$.

To prove the first claim, we need to find the Rees valuations of $J$. Following the method of \cite[Corollary 3.3]{HubSwa08}, the Rees valuations of $J$ are given by the reduced integral form of the bounding faces of the Newton polyhedron. By construction $\textnormal{NP}(J)\!=\!C_g$ and so these bounding faces come from the non-redundant hyperplanes $h_1\!=\!f_1g,\dots,h_r\!=\!f_rg$. To get the reduced integral form of these hyperplanes, let $m_i\!=\!\textnormal{gcd}(a_1^i,\dots,a_d^i,f_ig)$  for $1\!\leqslant\! i \!\leqslant\! r$ be the reducing factor. Notice that since $\textnormal{gcd}(a_1^i,\dots,a_d^i,f_i)\!=\!1$ for $1\!\leqslant\! i \!\leqslant\! r$, then we have that $m_i|g$ for $1\!\leqslant\! i \!\leqslant\! r$. Then the bounding faces of the Newton polyhedron of $J$ in the reduced integral forms are $\frac{1}{m_i}h_i\!=\!\frac{g}{m_i}f_i$ for $1\!\leqslant\! i\!\leqslant\! r$. Thus, the (normalized) Rees valuations of $J$ are $w_i\!=\!\frac{1}{m_i}h_i$ and $w_i(J)\!=\!\frac{g}{m_i}f_i$ for $1\!\leqslant\! i\!\leqslant\! r$.

Then we have that for any $\sigma\!\in\!\Q_+$, $\ux^{\ua}\!\in\! I_{\sigma}$ if and only if $h_i(\ua)\!\geqslant\! \sigma f_i$ for all $1\!\leqslant\! i\!\leqslant\! r$. We can rewrite $h_i(\ua)\!\geqslant\! \sigma f_i$ with the valuations as $w_i(\ux^{\ua})\!\geqslant\! \sigma \frac{f_i}{m_i}$ for each $1\!\leqslant\! i\!\leqslant\! r$. Then notice that we can rewrite this further as 
\[
w_i(\ux^{\ua})\!\geqslant\! \sigma\frac{f_ig}{m_ig}\!=\!\frac{\sigma}{g}w_i(J)\textnormal{ for all }1\!\leqslant\! i\!\leqslant\! r.
\]
Hence, $\ux^{\ua}\!\in\! I_{\sigma}$ if and only if $w_i(\ux^{\ua})\!\geqslant\!\frac{\sigma}{g}w_i(J)\textnormal{ for all }1\!\leqslant\! i\!\leqslant\! r$. Thus, by the rational valuative criterion of \cref{basics}, $I_{\sigma}\!=\!J^{\frac{\sigma}{g}}$ for any $\sigma\!\in\!\Q_+$, finishing (1).

For (2), notice that $\ux^{\ua}\!\in\! I_{\sigma}$ if and only if $\frac{1}{f_i}h_i(\ua)\!\geqslant\! \sigma$ for all $1\!\leqslant\! i\!\leqslant\! r$. By the choice of $e$ we have $\frac{f}{f_i}h_i(\ua)\!\in\!\N$ so that $\frac{1}{f_i}h_i(\ua)\!\in\!\frac{1}{f}\N$ for all $1\!\leqslant\! i\!\leqslant\! r$. Thus, $\frac{1}{f_i}h_i(\ua)\!\geqslant\! \sigma$ implies $\frac{1}{f_i}h_i(\ua)$ is greater than or equal to the nearest element of $\frac{1}{f}\N$, i.e. $\frac{1}{f}\ceil*{f\sigma}$. Hence, $\frac{1}{f_i}h_i(\ua)\!\geqslant\! \sigma$ for each $1\!\leqslant\! i\!\leqslant\! r$ if and only if $\frac{1}{f_i}h_i(\ua)\!\geqslant\! \frac{1}{f}\ceil*{f\sigma}$ for each $1\!\leqslant\! i\!\leqslant\! r$. Thus 
\begin{equation}
\label{Iround}
    I_{\sigma}\!=\!I_{\frac{\ceil*{f\sigma}}{f}}.
\end{equation}
Hence, we can index $\{I_{\sigma}\}_{\sigma\in\Q_+}$ by $\N$ via $\{I_{\frac{n}{f}}\}_{n\in\N}$. Equivalently, we can describe the filtration via the lattice points inside the convex sets $\frac{n}{f}C_1$ for $n\!\in\!\N$.

For (3), we form the canonical indexing of the rational powers of $J$ by looking at the least common multiple of the Rees valuations of $J$ evaluated at $J$, i.e.
\[
e\!=\!\textnormal{lcm}(w_1(J),\dots,w_r(J))\!=\!\textnormal{lcm}(\frac{g}{m_1}f_1,\dots,\frac{g}{m_r}f_r).
\]
Notice that $f_i\frac{g}{m_i}|fg$ for each $1\!\leqslant\! i\!\leqslant\! r$ so that $e|fg$. From the rational valuative criterion of \cref{basics} we can describe the rational powers of $J$ via the lattice points inside the convex sets $\frac{n}{e}\textnormal{NP}(J)$ for $n\!\in\!\N$. Hence, for any $n\!\in\!\N$, $\frac{n}{e}\textnormal{NP}(J)\!=\!\frac{n}{e}C_g\!=\!\frac{ng}{e}C_1$ and using \cref{Iround} we have:
\begin{equation*}
J^{\frac{n}{e}}\!=\!I_{\frac{ng}{e}}\!=\!I_{\frac{1}{f}\cdot\ceil*{\frac{fgn}{e}}}\!=\!I_{\frac{fg}{e}\cdot\frac{n}{f}},
\end{equation*}
where the last equality follows from $e$ dividing $fg$ and thus completing the proof.
\end{proof}

\begin{remark}
\label{extension}
If $\textnormal{gcd}(a_1^i,\dots,a_d^i)\!=\!1$ (or $m_i\!=\!1$) for all $1\!\leqslant\! i\!\leqslant\! r$, then we can improve our understanding of the relationship between $\{J^{\sigma}\}_{\sigma\in\Q_+}$ and $\{I_{\sigma}\}_{\sigma\in\Q_+}$. Specifically, if $m_i\!=\!1$ we have
\[
e\!=\!\textnormal{lcm}(w_1(J),\dots,w_r(J))\!=\!\textnormal{lcm}(f_1g,\dots,f_rg)\!=\!\textnormal{lcm}(f_1,\dots,f_r)g\!=\!fg.
\]
Then, using part (1) and \cref{Iround}, for any $\sigma\!\in\!\Q_+$ we have
\[
J^{\sigma}\!=\!J^{\frac{\ceil*{fg\sigma}}{fg}}\!=\!J^{\frac{1}{g}\cdot\frac{\ceil*{fg\sigma}}{f}}\!=\!I_{\frac{\ceil*{fg\sigma}}{f}}\!=\!I_{g\sigma}
\]
\end{remark}

An important corollary of this theorem establishes a strong connection between integral closure theory and rational powers to symbolic powers.

\begin{corollary}
\label{sqfreeInt}
Let $I$ be a squarefree monomial ideal. Then there exists $g\!\in\!\N$ so that the rational powers of $I^{(g)}$ are the symbolic powers of $I$, that is $I^{(n)}\!=\!(I^{(g)})^{\frac{n}{g}}$ for every $n\!\in\!\N$.
\end{corollary}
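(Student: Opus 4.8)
The plan is to derive \cref{sqfreeInt} directly from \cref{bigThm} by exhibiting the symbolic powers of a squarefree monomial ideal as a family of ideals defined by nonnegative rational hyperplane inequalities. First I would recall the standard description of symbolic powers of squarefree monomial ideals: if $I = \bigcap_{k} \p_k$ is the (irredundant) primary/prime decomposition of the squarefree monomial ideal $I$ into monomial primes $\p_k = (x_i : i \in S_k)$, then a monomial $\xa$ lies in $I^{(n)}$ precisely when, for every $k$, the sum $\sum_{i \in S_k} \alpha_i \geqslant n$. Writing $h_k(\underline{X}) = \sum_{i \in S_k} X_i$, each $h_k$ is a hyperplane with coefficients in $\{0,1\} \subseteq \Q_{\geqslant 0}^d$, and $\xa \in I^{(n)}$ iff $h_k(\ua) \geqslant n$ for all $k$. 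More generally, for $\sigma \in \Q_+$ one can define $I_\sigma = (\xa : h_k(\ua) \geqslant \sigma \text{ for all } k)$ — this is the family appearing in the symbolic polyhedron picture of \cite{CoopSyP} — and $I_n = I^{(n)}$ for integer $n$.

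Second, I would discard any redundant hyperplanes among the $h_k$ so as to meet the hypothesis of \cref{bigThm} (a redundant $h_k$ contributes a face that is never active, so dropping it does not change any $I_\sigma$). Then \cref{bigThm}(1) applies to this non-redundant family $\{I_\sigma\}_{\sigma \in \Q_+}$ with coefficients in $\Q_{\geqslant 0}^d$: it produces a monomial ideal $J$ and $g \in \N$ with $I_\sigma = J^{\sigma/g}$ for every $\sigma \in \Q_+$. In particular, taking $\sigma = g$ gives $J^{g/g} = J^{1} = \overline{J} = I_g = I^{(g)}$ by \cref{basics}(2) (and since $I^{(g)}$ is already integrally closed, $\overline{I^{(g)}} = I^{(g)}$), and more usefully, $J^{\sigma/g} = I_\sigma$. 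Specializing to $\sigma = n$ integer and using that the rational powers only depend on the value $\sigma/g \in \Q_+$, we get $(I^{(g)})^{n/g}$: here one must check that $(I^{(g)})^{n/g}$ and $J^{n/g}$ agree. Since $I^{(g)} = \overline{I^{(g)}} = J^{g/g}$, the rational powers of $I^{(g)}$ coincide with those of $J$ by \cref{basics}(5) — both are governed by the same Rees valuations $w_i$ with the same values, because $\mathcal{RV}$ and the valuative data are insensitive to passing to the integral closure (recall $v(\overline{K}) = v(K)$). Hence $(I^{(g)})^{n/g} = J^{n/g} = I_n = I^{(n)}$, which is exactly the claim.

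The main obstacle I anticipate is the bookkeeping in the final identification $(I^{(g)})^{n/g} = J^{n/g}$: a priori $I^{(g)}$ and $J$ are different ideals (they need not be equal, only have the same integral closure), so one has to argue carefully that their rational power filtrations coincide. The clean way is to observe that for any ideal $K$, $\mathcal{RV}(K) = \mathcal{RV}(\overline{K})$ and $v(K) = v(\overline{K})$ for every Rees valuation $v$, so by the valuative criterion \cref{basics}(5) the rational powers $K^\alpha$ depend only on $\overline{K}$; applying this with $K = J$ and with $K = I^{(g)}$, and using $\overline{J} = I^{(g)} = \overline{I^{(g)}}$, forces $J^\alpha = (I^{(g)})^\alpha$ for all $\alpha \in \Q_+$. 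Everything else is a direct substitution into \cref{bigThm}. One should also remark that $g$ can be taken to be the number produced by \cref{bigThm}, namely the lcm of the denominators of the vertices of the symbolic polyhedron $C_1 = \bigcap_k \{h_k \geqslant 1\}$ intersected with the first-octant scaling, which ties the constant $g$ to the combinatorics of the $S_k$.
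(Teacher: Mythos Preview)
Your proposal is correct and follows essentially the same route as the paper: describe $I^{(n)}$ via the hyperplanes $h_{\p}(\ua)=\sum_{i\in S_{\p}}\alpha_i$ attached to the minimal primes and then invoke \cref{bigThm}. You are in fact more careful than the paper's own proof, which stops at ``satisfies the hypotheses of \cref{bigThm}'' and does not spell out why the ideal $J$ produced there may be replaced by $I^{(g)}$; your observation that $\overline{J}=I_g=I^{(g)}$ together with the integral-closure invariance of rational powers (via \cref{basics}(5), or directly from $\overline{K^a}=\overline{(\overline{K})^a}$) is exactly the missing sentence, and your remark on discarding redundant hyperplanes is likewise a detail the paper leaves implicit.
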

\begin{proof}
Since $I$ is squarefree, the minimal primes, $\textnormal{Min}(I)$, are complete intersections and hence we can use their powers to compute the symbolic powers. That is, we can write $I^{(n)}\!=\!\bigcap_{\textnormal{Min}(I)}\p^n$. Then we have for a monomial $\ux^{\ua}\!\in\! R$, $\ux^{\ua}\!\in\! I^{(n)}$ if and only if $\ux^{\ua}\!\in\! \p^n$ for all $\p\!\in\!\textnormal{Min}(I)$. As the $\p$ are monomial primes, we have that $\ux^{\ua}\!\in\! \p^n$ if and only if the degree of $\ux^{\ua}$ supported in the variables of $\p$ is at least $n$. That is, if we write $\p\!=\!(x_{i_1},\dots,x_{i_l})$  for $1\!\leqslant\! i_1\!<\!\dots\!<\!i_l\!\leqslant\! d$, then $\ux^{\ua}\!\in\! \p^n$ if and only if $\alpha_{i_1}+\dots+\alpha_{i_l}\!\geqslant\! n$. Hence, we can associate a hyperplane equation to each $\p$ via $h_{\p}(\ua)\!=\!\alpha_{i_1}+\dots+\alpha_{i_l}$. Thus we have that $\ux^{\ua}\!\in\! I^{(n)}$ if and only if $\ux^{\ua}\!\in\! \p^n$ for all $\p\!\in\!\textnormal{Min}(I)$ if and only if $h_{\p}(\ua)\!\geqslant\! n$ for all $\p\!\in\!\textnormal{Min}(I)$. Hence, $\{I^{(n)}\}_{n\in\N}$ satisfies the hypotheses of \cref{bigThm}, finishing the proof.
\end{proof}

The polyhedron formed by the minimal primes here can be made more general for non-squarefree monomial ideals and is called the \textit{symbolic polyhedron}. For a construction and exposition of the symbolic polyhedron, we refer the reader to \cite{CoopSyP}.

\begin{example}
In $R\!=\!\mathbb{K}[x,y,z]$ let $I\!=\!(xy,yz,zx)\!=\!(x,y)\cap(x,z)\cap(y,z)$. Following the argument of \cref{bigThm}, we have that the symbolic polyhedron is given by the valuations $v_1(x^ay^bz^c)\!=\!a+b$, $v_2(x^ay^bz^c)\!=\!a+c$, and $v_3(x^ay^bz^c)\!=\!b+c$. Then the vertices are $(\frac{1}{2},\frac{1}{2},\frac{1}{2})$, $(1,0,1)$, $(1,1,0)$, and $(0,1,1)$. Hence $g\!=\!2$ so that the rational powers of $I^{(2)}\!=\!(x^2y^2,xyz,x^2z^2,y^2z^2)$ coincide with the symbolic powers of $I$. Specifically, for all $n\!\in\!\N$ we have $I^{(n)}\!=\!(I^{(2)})^{\frac{n}{2}}$.
\end{example}

Using \cref{bigThm} as a characterization of rational powers, we can use the methods of symbolic powers of squarefree monomial ideals to investigate invariants of rational powers. One of the recent methods in this direction comes from \cite{Montano18}.

\section{Convergence of Depth and Regularity via Splittings}

While this section is used to prove facts about rational powers, we present this generalization of \cite{MNB20,Montano18} as it can hold for more general filtrations. Throughout this section, let $R\!=\!\mathbb{K}[x_1,\dots,x_d]$ for some field $\mathbb{K}$, and $I\subset R$ be a monomial ideal.

Using the set-up from \cite{Montano18}, let $m\!\in\!\N$ and denote $R^{\frac{1}{m}}\!=\!\mathbb{K}[x_1^{\frac{1}{m}},\dots,x_d^{\frac{1}{m}}]$, which is a $\frac{1}{m}\N$-graded ring. Notice that  $R^{\frac{1}{m}}$ is isomorphic to $R$ as rings. Let $i:R\to R^{\frac{1}{m}}$ be the natural inclusion given by $i(\ux^{\ua})\!=\!(\ux^{m\ua})^{1/m}$. Following the method of \cite{Montano18} we define the splitting $R$-homomorphism induced by the map $\phi_m^R:R^{\frac{1}{m}}\to R$ given by
\[
\phi_m^R((\ux^{\ua})^{1/m})\!=\!
\begin{cases}
\ux^{\ua/m} & \ua\equiv\underline{0}\,(\textnormal{mod }\,m), \\
0 & \textnormal{otherwise},
\end{cases}
\]
with $\underline{0}\!=\!(0,\dots,0)\!\in\!\N^d$. Notice that $\phi_m^R$ restricted to $R$ is the identity, hence with $i$ forms a split map.

\begin{definition}
\label{asympStab}
We call a filtration
of monomial ideals $\{I_n\}_{n\in\N}$ \textit{asymptotically stable} if it satisfies the following conditions:
\begin{enumerate}
    \item The associated Rees algebra $\bigoplus_{n\in\N}I_nt^n\subset R[t]$ is Noetherian.
    \item For an unbounded sequence $\{m_f\}_{f\in\N}\!\subseteq\!\N$, $i$ and $\phi_{m_f}^R$ above induce a split injection $\iota\!:\!R/I_{n+1}\!\to\! R^{\frac{1}{m_f}}/(I_{nm_f+j})^{\frac{1}{m_f}}$ for all $n,f\!\in\!\N$ and $1\!\leqslant\! j\!\leqslant\! m_f$.
\end{enumerate}
\end{definition}

We can make the second requirement easier to check via the following remark.

\begin{remark}
\label{simp}
For the second condition, to check that $\iota$ is a split injection, since $\phi_m^R$ is a splitting we need only check that $i(I_{n+1})\subseteq (I_{nm+j})^{\frac{1}{m}}$ and that $\phi_m^R((I_{nm+j})^{\frac{1}{m}})\!=\!I_{n+1}$ for infinitely many $m\!\in\!\N$, all $n\!\in\!\N$, and $1\!\leqslant\! j\!\leqslant\! m$. Notice for the former that $i(I_n)\subseteq (I_n^m)^{\frac{1}{m}}\subseteq (I_{nm})^{\frac{1}{m}}$ by the definition of $i$ and the second condition. Hence $i(I_{n+1})\subseteq (I_{nm+m})^{\frac{1}{m}}\subseteq (I_{nm+j})^{\frac{1}{m}}$, and thus it suffices to show the latter condition that $\phi_m^R((I_{nm+j})^{\frac{1}{m}})\!=\!I_{n+1}$. The containment $i(I_{n+1})\subseteq (I_{nm+j})^{\frac{1}{m}}$ also yields that $I_{n+1}\subseteq\phi_m^R((I_{nm+j})^{\frac{1}{m}})$ since $\phi_m^R$ is split and $R$-linear. Hence we need only show the other containment, that $\phi_m^R((I_{nm+j})^{\frac{1}{m}})\subseteq I_{n+1}$.
\end{remark}

\begin{example}
From \cite{MNB20}, we note that the symbolic powers of squarefree monomial ideals form an asymptotically stable filtration.
\end{example}

The following also shows that the rational powers of a monomial ideal form an asymptotically stable family, extending the ideas of \cite{Montano18} and the connection of \cref{sqfreeInt}.

\begin{proposition}
\label{regstable}
Let $I$ be a monomial ideal and $e\!=\!\textnormal{lcm}\{v(I)\,|\,v\!\in\!\mathcal{RV}(I)\}$. Then the rational powers of $I$, $\{I^{\frac{n}{e}}\}_{n\in\N}$ are asymptotically stable.
\end{proposition}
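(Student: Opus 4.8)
The plan is to verify the two conditions of \cref{asympStab} for the filtration $\{I_n\}_{n\in\N}$ with $I_n:=I^{n/e}$. Throughout I would use the packaged rational valuative criterion of \cref{MVPCrit} — $\ux^{\ua}\in I_n$ if and only if $\frac{e}{v(I)}v(\ux^{\ua})\ge n$ for every $v\in\mathcal{RV}(I)$, where each $\frac{e}{v(I)}$ is a \emph{positive integer} — together with the fact (\cref{hyperP}) that every Rees valuation of the monomial ideal $I$ is monomial, so $v(\ux^{\ua})=\sum_k v(x_k)\alpha_k$ with all $v(x_k)\in\N$. That $\{I_n\}$ is a filtration in the required sense is immediate from \cref{basics}: $I_0=I^{0/1}=\overline{I^0}=R$, and $I_nI_k\subseteq I^{(n+k)/e}=I_{n+k}$.

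First I would check condition (1), Noetherianity of the Rees algebra $\mathcal{S}:=\bigoplus_{n\in\N}I_nt^n\subseteq R[t]=\mathbb{K}[x_1,\dots,x_d,t]$. Since $I$ is monomial, $\mathcal{S}$ is a monomial subalgebra, and $\ux^{\ua}t^n\in\mathcal{S}$ precisely when $(\ua,n)\in\N^{d+1}$ satisfies the finitely many rational linear inequalities $n\le\frac{e}{v(I)}v(\ux^{\ua})$, $v\in\mathcal{RV}(I)$. Thus $\mathcal{S}$ is spanned by the lattice points of a rational polyhedral cone in $\R^{d+1}$; by Gordan's lemma that monoid is finitely generated, so $\mathcal{S}$ is a finitely generated $\mathbb{K}$-algebra and in particular Noetherian. (Alternatively one could cite that the Rees algebra of the filtration of rational powers of any ideal is Noetherian, cf. \cite[Section 10.5]{HSInt}.)

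For condition (2) I would take the unbounded sequence $m_f=f$ and invoke \cref{simp}, which reduces the claim to the single containment $\phi_m^R\big((I_{nm+j})^{1/m}\big)\subseteq I_{n+1}$ for all $m,n\in\N$ and $1\le j\le m$. A monomial of $(I_{nm+j})^{1/m}$ has the form $(\ux^{\ua})^{1/m}$ with $\ux^{\ua}\in I_{nm+j}$, and $\phi_m^R$ sends it to $0$ unless $\ua\equiv\underline{0}\,(\textnormal{mod }m)$, in which case to $\ux^{\ua/m}$; so it suffices to prove that $\ux^{\ua}\in I_{nm+j}$ together with $m\mid\alpha_k$ for all $k$ forces $\ux^{\ua/m}\in I_{n+1}$. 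Fix $v\in\mathcal{RV}(I)$. Monomiality and $m\mid\alpha_k$ give $m\mid v(\ux^{\ua})$; hence $N:=\frac{e}{v(I)}v(\ux^{\ua})$ is an integer divisible by $m$, and $\ux^{\ua}\in I_{nm+j}$ says $N\ge nm+j\ge nm+1$. The least multiple of $m$ that is at least $nm+1$ is $(n+1)m$, so $N\ge(n+1)m$, i.e. $\frac{e}{v(I)}v(\ux^{\ua/m})=\frac{1}{m}N\ge n+1$. As $v$ was arbitrary, \cref{MVPCrit} yields $\ux^{\ua/m}\in I_{n+1}$.

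I do not anticipate a deep obstacle. Condition (1) is a bookkeeping matter given the cone description (or a citation), and condition (2), after the reduction afforded by \cref{simp}, is exactly the elementary ``round up to the next multiple of $m$'' observation that underlies the squarefree symbolic-power arguments of \cite{Montano18} — now available for all rational powers thanks to the uniform integer-valued valuative bound recorded in \cref{MVPCrit}. The point that most deserves care is the precise meaning of $(\cdot)^{1/m}$ and the action of $\phi_m^R$ on \emph{all} monomials of $(I_{nm+j})^{1/m}$ (not merely its generators), which I would spell out exactly as in \cite{Montano18}.
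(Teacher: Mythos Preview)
Your proposal is correct and follows essentially the same route as the paper: both reduce condition~(2) via \cref{simp} to the containment $\phi_m^R((I_{nm+j})^{1/m})\subseteq I_{n+1}$ and verify it with the integer-valued valuative bound from \cref{MVPCrit}, the only cosmetic difference being that you argue ``$m\mid N$ and $N\ge nm+1$ force $N\ge (n+1)m$'' whereas the paper phrases the same step as ``$w_i(\ub)\in\N$ and $w_i(\ub)>n$ force $w_i(\ub)\ge n+1$.'' For condition~(1) the paper cites \cite[Lemma~1.1(a)]{HonUlr14} rather than invoking Gordan's lemma directly, but your self-contained cone argument is equally valid.
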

\begin{proof}
From \cref{basics}, the rational powers form a filtration.
Hence we must now show that the associated Rees algebra is Noetherian. From \cite[Lemma 1.1(a)]{HonUlr14}, using our polynomial ring $R$, the algebra $\bigoplus_{n\in\Z}I^{\frac{n}{e}}u^n\subseteq R[u,u^{-1}]$ is a finitely generated $R$-algebra. Hence, its positively graded piece must also be a finitely generated $R$-algebra.

Now, to show condition (2), we use \cref{simp}. Let $\mathcal{RV}(I)\!=\!\{v_1,\dots,v_r\}$, then using \cref{MVPCrit}, let $w_i\!=\!\frac{e}{v_i(I)}v_i$ for each $1\!\leqslant\! i\!\leqslant\! r$. Then let $w_i\!\equiv\!a_1^iX_1+\dots+a_d^iX_d$ be the hyperplane equation corresponding to $w_i$ with $a_k^i\!\in\!\N$ for $1\!\leqslant\! k\!\leqslant\! d$ and $1\!\leqslant\! i\!\leqslant\! r$ as guaranteed in \cref{MVPCrit} and \cref{hyperP}. Let $m,n\!\in\!\N$ and $1\!\leqslant\! j\!\leqslant\! m$. 

Let $(\ux^{\ua})^{1/m}\!\in\! (I^{\frac{nm+j}{e}})^{\frac{1}{m}}$
such that $\phi_m^R((\ux^{\ua})^{1/m})\neq 0$. Then $\ux^{\ua}\!\in\!I^{\frac{nm+j}{e}}$ so that $w_i(\ux^{\ua})\!\geqslant\! nm+j$ for each $1\!\leqslant\! i\!\leqslant\! r$. Let $\ub\!=\!\ua/m\!\in\!\N^d$ as $\ua\equiv\underline{0}\,(\textnormal{mod }\,m)$. In other words $\phi_m^R((\ux^{\ua})^{1/m})\!=\!\ux^{\ub}$. 
So, since $\ux^{\ub}=\ux^{\ua/m}$, applying $\phi_m^R$ yields that for each $1\!\leqslant\! i\!\leqslant\! r$ 
\[
w_i(\ux^{\ua/m})\!=\!a_1^i\frac{\alpha_1}{m}+\dots+a_d^i\frac{\alpha_d}{m}\!=\!\frac{1}{m}(a_1^i\alpha_1+\dots+a_d^i\alpha_d)\!=\!\frac{w_i(\ux^{\ua})}{m}\!\geqslant\! n+\frac{j}{m}.
\]
But since $w_i$ has coefficients in $\N$ we have $w_i(\ub)\!\geqslant\! n+1$ for each $1\!\leqslant\! i\!\leqslant\! r$. Hence $\ux^{\ub}\!\in\! I^{\frac{n+1}{e}}$, finishing the proof.
\end{proof}

The following lemma and theorem are needed to obtain our desired convergences.

\begin{lemma}
\label{ineqLem}
If $\{I_n\}_{n\in\N}$ is an \textit{asymptotically stable} family, then for all $n\!\in\!\N$ and an infinite sequence $\{m_f\}_{f\in\N}\!\subseteq\!\N$ as in \cref{asympStab}:
\begin{enumerate}
    \item[$(1)$] $\textnormal{depth}(R/I_n)\!\leqslant\!\textnormal{depth}(R/I_{\ceil*{\frac{n}{m_f}}})$,
    \item[$(2)$] $a_i(R/I_n)\!\geqslant\! m_fa_i(R/I_{\ceil*{\frac{n}{m_f}}})$ for $0\!\leqslant\! i\!\leqslant\! \textnormal{dim }R/\sqrt{I_1}$.
\end{enumerate}
\end{lemma}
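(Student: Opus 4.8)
The plan is to exploit the split injection $\iota\colon R/I_{n+1}\to R^{\frac{1}{m_f}}/(I_{nm_f+j})^{\frac{1}{m_f}}$ guaranteed by asymptotic stability, together with the ring isomorphism $R^{\frac{1}{m_f}}\cong R$, to transfer depth and $a$-invariant inequalities across the filtration. Writing $m=m_f$ for brevity, the key observation is that for a given index $N$ one can find $n$ with $n+1=\ceil*{\frac{N}{m}}$ and $1\le j\le m$ so that $nm+j=N$; concretely, set $n=\ceil*{\frac{N}{m}}-1$ and $j=N-nm$, which lies in $\{1,\dots,m\}$ precisely because $n=\ceil*{N/m}-1$. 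With this bookkeeping in place, the split injection reads $R/I_{\ceil*{N/m}}\hookrightarrow R^{\frac{1}{m}}/(I_N)^{\frac{1}{m}}$ with $R$-linear splitting, and both (1) and (2) will follow from standard facts about how depth and local cohomology behave under split maps and under the grading rescaling $R\rightsquigarrow R^{\frac{1}{m}}$.

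For part (1), I would first recall that if $\iota\colon M\to M'$ is a split injection of finitely generated graded modules over a Noetherian local (or $^*$local) ring, then $\textnormal{depth}(M)\ge\textnormal{depth}(M')$ — because $M$ is a direct summand of $M'$ and depth of a direct sum is the minimum of the depths, combined with the fact that $\textnormal{depth}$ can be read off from the (non)vanishing of $\textnormal{Ext}$ or local cohomology modules, which commute with finite direct sums. Applying this to $\iota\colon R/I_{\ceil*{N/m}}\hookrightarrow R^{\frac{1}{m}}/(I_N)^{\frac{1}{m}}$ gives $\textnormal{depth}(R/I_{\ceil*{N/m}})\ge\textnormal{depth}\bigl(R^{\frac{1}{m}}/(I_N)^{\frac{1}{m}}\bigr)$. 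It then remains to identify the right-hand side with $\textnormal{depth}(R/I_N)$: since $R^{\frac{1}{m}}\cong R$ as rings via $x_k^{1/m}\mapsto x_k$, this isomorphism carries $(I_N)^{\frac{1}{m}}$ to $I_N$ (the extension of $I_N$ is generated by the images of its monomial generators, which are again the monomial generators of $I_N$ under the identification), and depth is a ring-theoretic invariant, unchanged by isomorphism. Relabeling $N$ as $n$ yields exactly (1).

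For part (2), the same split injection gives, after applying the exact functor $\textnormal{H}^i_{\mathfrak m}(-)$ and using that $\textnormal{H}^i_{\mathfrak m^{1/m}}\bigl(R^{\frac1m}/(I_N)^{\frac1m}\bigr)$ contains $\textnormal{H}^i_{\mathfrak m}(R/I_{\ceil*{N/m}})$ as a graded direct summand, an inequality on the top nonvanishing degrees. The one genuine subtlety — and the step I expect to be the main obstacle — is the grading bookkeeping: the module $R^{\frac{1}{m}}/(I_N)^{\frac{1}{m}}$ is $\frac1m\Z$-graded, and under the isomorphism $R^{\frac1m}\cong R$ a graded piece in degree $c\in\frac1m\Z$ on the $R^{1/m}$-side corresponds to degree $mc\in\Z$ on the $R$-side; so $a_i\bigl(R^{\frac1m}/(I_N)^{\frac1m}\bigr)$, computed with respect to the fine $\frac1m\Z$-grading, equals $\frac1m a_i(R/I_N)$. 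Meanwhile the inclusion $\iota$ is a map of $\frac1m\Z$-graded modules that restricts to the standard $\Z$-grading on the source, so $\textnormal{H}^i_{\mathfrak m}(R/I_{\ceil*{N/m}})_j$ injects into $\textnormal{H}^i_{\mathfrak m^{1/m}}\bigl(R^{\frac1m}/(I_N)^{\frac1m}\bigr)_j$ for integer $j$; hence $a_i(R/I_{\ceil*{N/m}})\le a_i\bigl(R^{\frac1m}/(I_N)^{\frac1m}\bigr)=\frac1m a_i(R/I_N)$, i.e. $a_i(R/I_N)\ge m\, a_i(R/I_{\ceil*{N/m}})$, which is (2). (Here I use $\textnormal{dim}\,R/\sqrt{I_1}$ as the range of $i$ because $I_n$ and $I_1$ have the same radical for the filtrations in question, so the relevant local cohomology modules vanish above that dimension, making the $a$-invariants well-defined.) I would be careful to note that the sign direction in the local-cohomology inequality is opposite to the depth inequality precisely because passing to $R^{1/m}$ stretches degrees by a factor of $m$, and that this is what produces the asymmetry between (1) and (2).
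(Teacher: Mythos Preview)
Your proposal is correct and follows essentially the same route as the paper: use the split injection $\iota$ from asymptotic stability to exhibit $\textnormal{H}^i_{\mathfrak m}(R/I_{\lceil N/m\rceil})$ as a direct summand of $\textnormal{H}^i_{\mathfrak m}(R^{1/m}/(I_N)^{1/m})$, identify the latter with $(\textnormal{H}^i_{\mathfrak m}(R/I_N))^{1/m}$ via the ring isomorphism $R^{1/m}\cong R$, and read off the depth and $a$-invariant inequalities. Your explicit re-indexing $n=\lceil N/m\rceil-1$, $j=N-nm$ and your discussion of the $\tfrac{1}{m}\Z$-grading bookkeeping are slightly more detailed than what the paper writes down, but the argument is the same.
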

\begin{proof}
Notice that without loss of generality we may assume that the sequence $\{m_f\}_{f\in\N}\!\subseteq\!\N$ is increasing. Now, we have the splitting map $\iota:R/I_{n+1}\to R^{\frac{1}{m_f}}/I_{nm_f+j}^{\frac{1}{m_f}}$ for all $n\!\in\!\N$ and $1\!\leqslant\! j\!\leqslant\! m_f$. Hence, for $0\!\leqslant\! i\!\leqslant\! \textnormal{dim }R/\sqrt{I_1}$, the module $\textnormal{H}_{\mathfrak{m}}^i(R/I_{n+1})$ is a direct summand of $\textnormal{H}_{\mathfrak{m}}^i(R^{\frac{1}{m_f}}/I_{nm_f+j}^{\frac{1}{m_f}})$. Now, by the isomorphism of rings $R\cong R^{\frac{1}{m_f}}$ we have an equivalence of categories (of $R$-modules and $R^{\frac{1}{m_f}}$-modules) and that if we set $\mathfrak{m}\!=\!(x_1,\dots,x_d)$ to be the homogeneous maximal ideal then $\mathfrak{m}R^{\frac{1}{m_f}}\!=\!\mathfrak{m}^{\frac{1}{m_f}}$. Thus we have the following equality:
\begin{equation}
\label{isoLC}
    (\textnormal{H}_{\mathfrak{m}}^i(R/I_{nm_f+j}))^{\frac{1}{m_f}}\!=\!\textnormal{H}_{\mathfrak{m}^{\frac{1}{m_f}}}^i(R^{\frac{1}{m_f}}/I_{nm_f+j}^{\frac{1}{m_f}})\!=\!\textnormal{H}_{\mathfrak{m}}^i(R^{\frac{1}{m_f}}/I_{nm_f+j}^{\frac{1}{m_f}}).
\end{equation}
Hence $\textnormal{H}_{\mathfrak{m}}^i(R/I_{nm_f+j})\!=\!0$ implies $\textnormal{H}_{\mathfrak{m}}^i(R^{\frac{1}{m_f}}/I_{nm_f+j}^{\frac{1}{m_f}})\!=\!0$ and hence, as it is a direct summand, $\textnormal{H}_{\mathfrak{m}}^i(R/I_{n+1})\!=\!0$. Thus
\[
\textnormal{depth}(R/I_{n+1})\!\leqslant\!\textnormal{depth}(R/I_{nm_f+j})
\]
proving the first part.

Again from the splitting of $\iota$ and hence the direct summand of local cohomology, and \cref{isoLC} above, we have
\[
a_i(R/I_{n+1})\!\leqslant\! a_i(R^{\frac{1}{m_f}}/I_{nm_f+j}^{\frac{1}{m_f}})\!=\!\frac{1}{m_f}a_i(R/I_{nm_f+j})
\]
showing the second part.
\end{proof}

\begin{theorem}
\label{depthReg}
If $\{I_n\}_{n\in\N}$ is an \textit{asymptotically stable} family, then
\begin{enumerate}
    \item[$(1)$] $\lim\limits_{n \to \infty}\frac{\textnormal{reg}(I_n)}{n}$ exists,
    \item[$(2)$] $\lim\limits_{n \to \infty}\textnormal{depth}(R/I_n)$ exists.
\end{enumerate}
\end{theorem}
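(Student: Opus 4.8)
The plan is to reduce to the classical asymptotic statements for ordinary powers along an arithmetic progression, and then use \cref{ineqLem} to glue the progressions together.

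First, since $\bigoplus_{n}I_nt^n$ is a finitely generated $R$-algebra by \cref{asympStab}, there is a $c\in\N$ for which its $c$-th Veronese $\bigoplus_m I_{mc}t^{mc}$ is standard graded -- that is, $I_{mc}=(I_c)^m$ for all $m$ -- and for which the whole algebra is module-finite over this Veronese (each homogeneous algebra generator $x$ of degree $d_x$ satisfies $x^c\in(\bigoplus_m I_{mc}t^{mc})_{d_x}$, hence is integral over it). Consequently, for each residue $0\!\leqslant\! r\!<\!c$ the graded module $\bigoplus_m I_{mc+r}t^{mc+r}$ is a finitely generated graded module over $\bigoplus_m(I_c)^mt^{mc}$. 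I would then invoke the theorems of Cutkosky--Herzog--Trung and Kodiyalam on asymptotic linearity of regularity of ordinary powers (in their graded-module form, or equivalently the quasi-linearity theorem of Trung--Wang) to conclude that $\rho_r:=\lim_{n\to\infty,\ n\equiv r}\textnormal{reg}(I_n)/n$ exists for every $r$, and Brodmann's theorem on eventual stability of depth (again in its graded-module form) to conclude that $\textnormal{depth}(R/I_n)$ is eventually constant on each residue class modulo $c$, say equal to $\delta_r$. It then remains to show that all the $\rho_r$ coincide and all the $\delta_r$ coincide.

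For this I would fix one index $\mu$ from the sequence $\{m_f\}$ of \cref{asympStab} with $\mu\!>\!c$ and observe that, since $c/\mu\!<\!1$, the integer sequence $t\mapsto\lceil(r+tc)/\mu\rceil$ is nondecreasing, tends to infinity, and has consecutive differences in $\{0,1\}$; hence it attains every large integer value, so as $n$ runs through the residue class $r$ modulo $c$ the residue $r':=\lceil n/\mu\rceil\bmod c$ attains every value in $\{0,\dots,c-1\}$ infinitely often. Fix a target $r'$ and restrict $n$ to the (infinite) sub-progression of class $r$ on which $\lceil n/\mu\rceil\equiv r'$. For depth, \cref{ineqLem} gives $\delta_r=\textnormal{depth}(R/I_n)\leqslant\textnormal{depth}(R/I_{\lceil n/\mu\rceil})=\delta_{r'}$ for $n\gg0$ along this sub-progression; since $r'$ was arbitrary, $\delta_r\leqslant\delta_{r'}$ for all $r,r'$, forcing all $\delta_r$ equal, so $\textnormal{depth}(R/I_n)$ is eventually constant and $(2)$ follows. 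For regularity, writing $k=\lceil n/\mu\rceil$ and picking $j$ with $\textnormal{reg}(R/I_k)=a_j(R/I_k)+j$ (note $0\leqslant j\leqslant\dim R/\sqrt{I_1}\leqslant d$), the inequality $a_j(R/I_n)\geqslant\mu\,a_j(R/I_k)$ from \cref{ineqLem} yields $\textnormal{reg}(R/I_n)\geqslant\mu\,\textnormal{reg}(R/I_k)-(\mu-1)d$, hence $\textnormal{reg}(I_n)\geqslant\mu\,\textnormal{reg}(I_k)-C_\mu$ for a constant $C_\mu$ independent of $n$. Dividing by $n$, using that $\mu k-n$ is bounded, and letting $n\to\infty$ along the sub-progression gives $\rho_r\geqslant\rho_{r'}$; sweeping $r'$ over all residues shows all $\rho_r$ coincide, so $\lim_n\textnormal{reg}(I_n)/n=\rho_0$ exists, proving $(1)$.

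The step I expect to be the main obstacle is the reduction in the second paragraph: passing from ordinary powers of $I_c$ to controlling $\textnormal{reg}(I_{mc+r})$ and $\textnormal{depth}(R/I_{mc+r})$ for $r\neq0$ requires the graded-module versions of the Cutkosky--Herzog--Trung/Kodiyalam and Brodmann theorems for the finitely generated modules over the Veronese Rees algebra. Once this eventual quasi-linearity of regularity and eventual periodicity of depth are in hand, the residue-sweeping argument powered by \cref{ineqLem} is elementary.
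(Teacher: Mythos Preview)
Your approach is essentially the same as the paper's: both use Noetherianity of the Rees algebra to get eventual quasi-linearity of regularity and periodicity of depth along residue classes, and then invoke \cref{ineqLem} to force the limits on the different residues to agree. The only notable difference is in the reduction step you flag as the main obstacle: rather than appealing to graded-module versions of Brodmann/Kodiyalam over the Veronese, the paper uses Ratliff's fact that $I_{n+k}=I_nI_k$ for $n\geqslant k$, so $I_{(n+1)k+j}=(I_k)^nI_{k+j}$, and then applies \cite[Theorem~1.1]{HH05} (eventual constancy of $\textnormal{depth}(J^nM)$) for depth and \cite[Proof of Theorem~4.3]{CHT99} for the quasi-linearity of regularity---this sidesteps the module-over-Veronese formulation entirely.
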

\begin{proof}
Let $\{m_f\}_{e\in\N}$ be the sequence of natural numbers that satisfy the infinitely many splittings of \cref{asympStab}. Similarly to before, we may assume without loss of generality that the sequence is increasing.

For (1), note that
\begin{equation}
\label{regLim}
\begin{split}
    \lim\limits_{n \to \infty}\frac{\textnormal{reg}(I_n)}{n}
    & \!=\!\lim\limits_{n \to \infty}\frac{\textnormal{reg}(R/I_n)+1}{n} \\
    & \!=\!\lim\limits_{n \to \infty}\frac{\max\{a_i(R/I_n)+i+1\,|\,0\!\leqslant\! i\!\leqslant\! \textnormal{dim }R/\sqrt{I_1}\}}{n} \\
    & \!=\!\lim\limits_{n \to \infty}\frac{\max\{a_i(R/I_n)\,|\,0\!\leqslant\! i\!\leqslant\! \textnormal{dim }R/\sqrt{I_1}\}}{n} \\
    & \!=\!\max\{\lim\limits_{n \to \infty}\frac{a_i(R/I_n)}{n}\,|\,0\!\leqslant\! i\!\leqslant\! \textnormal{dim }R/\sqrt{I_1}\}. \\
\end{split} 
\end{equation}
Thus we set $\alpha_n\!=\!\max_i\{a_i(R/I_n)\}$. Then, as the associated Rees algebra is Noetherian, using the lemma inside the proof from \cite[Proof of Theorem 4.3]{CHT99}, we know that $\textnormal{reg}(I_n)$ must be quasi-linear for large enough $n$. Hence, there are $c_1,\dots,c_r,b_1,\dots,b_r\!\in\!\N$ such that $\textnormal{reg}(R/I_n)\!=\!c_jn+b_j$ for $n\equiv j \,(\textnormal{mod }\,r)$ for $n\!\gg\!0$. By \cref{regLim} and the quasi-linearity we have $\lim\limits_{k\to\infty}\frac{\alpha_{rk+j}}{rk+j}\!=\!c_j$ for each $j$. From this, fix $1\!\leqslant\! i,j\!\leqslant\! r$ and $m_f\!>\!r$. 
Let $\epsilon\!>\!0$ and $t\!\in\!\N$ such that $|c_j-\frac{\alpha_{rk+j}}{rk+j}|\!<\!\epsilon$ for all $k\!\geqslant\! t$. Thus, $c_j-\frac{\alpha_{rk+j}}{rk+j}\!<\!\epsilon$. From \cref{ineqLem} part (2) we have that
\[
\alpha_{rk+j}\!\leqslant\!\frac{\alpha_{m_{fs}(rk+j-1)+b}}{m_{fs}}
\]
for every $s\!\in\!\N$ and $1\!\leqslant\! b\!\leqslant\! m_{fs}$. Thus,
\begin{equation}
\label{ineqReg}
    c_j-\epsilon\!\leqslant\!\frac{\alpha_{rk+j}}{rk+j}\!\leqslant\!\frac{\alpha_{m_{fs}(rk+j-1)+b}}{m_{fs}(rk+j)}\!\leqslant\!\frac{\alpha_{m_{fs}(rk+j-1)+b}}{m_{fs}(rk+j-1)+b}.
\end{equation}
Because \cref{ineqReg} holds for all $1\!\leqslant\! b\!\leqslant\! m_{fs}$ and that $r\!<\!m_f\!\leqslant\! m_{fs}$, we can find infinitely many $s,k$, and $b$ such that $m_{fs}(rk+j-1)+b\equiv i \,(\textnormal{mod }\,r)$. We use the pairs in \cref{ineqReg} to yield in the limit that $c_j-\epsilon\!\leqslant\! c_i$ for all $\epsilon$. Hence $c_j\!\leqslant\! c_i$ and as $i$ and $j$ were arbitrary, we have $c_1\!=\!\dots\!=\!c_r$, showing that the regularity limit exists.

For (2), again as the associated Rees algebra is Noetherian, there is a $k\!\in\!\N$ such that $I_{n+k}\!=\!I_nI_k$ for every $n\!\geqslant\! k$ (see \cite[Remark 2.4.3]{Rat79}). Thus, for $0\!\leqslant\! j\!\leqslant\! k-1$ there are $d_j,h_j\!\in\!\N$ with
\[
\textnormal{depth}(I_{(n+1)k+j})\!=\!\textnormal{depth}((I_{k})^nI_{k+j})\!=\!d_j
\]
so that $\textnormal{depth}(R/I_{(n+1)k+j})\!=\!d_j-1$ for $n\!\geqslant\! h_j$ from \cite[Theorem 1.1]{HH05}.

Let $d\!=\!\min_n\{\textnormal{depth}(R/I_n)\}$ and fix $l\!\in\!\N$ with $d\!=\!\textnormal{depth}(R/I_l)$. Let $f\!\in\!\N$ such that $m_f\!>\!k$ and $m_f(l-1)\!>\!(h_j+1)k$ for $0\!\leqslant\! j\!\leqslant\! k-1$. Now, \cref{ineqLem} part (2) implies that
\[
\textnormal{depth}(R/I_{m_f(l-1)+i})\!\leqslant\!\textnormal{depth}(R/I_{l})
\]
for $1\!\leqslant\! i\!\leqslant\! m_f$. By choice of $m_f$, for each $0\!\leqslant\! j\!\leqslant\! k-1$ there are $t\!\geqslant\! h_j+1$ and $1\!\leqslant\! i\!\leqslant\! m_f$ with $m_f(l-1)+i\!=\!tk+j$. Then
\[
d\!=\!\textnormal{depth}(R/I_l)\!\geqslant\!\textnormal{depth}(R/I_{m_f(l-1)+i})\!=\!\textnormal{depth}(R/I_{tk+j})\!=\!d_j-1.
\]
Hence, $d\!=\!d_j-1$ for every $0\!\leqslant\! j\!\leqslant\! k-1$, showing the convergence.
\end{proof}

\section{Asymptotics of Rational Powers}

This section focuses on using the connection between rational powers and symbolic powers to find the convergence of Stanley depth, and the connection with integral closure powers to find the convergence of length for some local cohomology modules. First, as we can gleam from the previous section, we present the proof of \cref{regdepthRP}.

\begin{theorem}
\label{verLimits}
If $I$ is any monomial ideal and $e\!=\!\textnormal{lcm}\{v(I)\,|\,v\!\in\!\mathcal{RV}(I)\}$, then 
\begin{enumerate}
    \item[$(1)$] $\lim\limits_{n \to \infty}\frac{\textnormal{reg}(I^{\frac{n}{e}})}{n}$ exists and is equal to $\frac{1}{e}\lim\limits_{n \to \infty}\frac{\textnormal{reg}(\overline{I^n})}{n}$.
    \item[$(2)$] $\lim\limits_{n \to \infty}\textnormal{depth}(R/I^{\frac{n}{e}})$ exists and is equal to $d-\ell(I)$, where $\ell(I)$ is the analytic spread of $I$.
\end{enumerate}
\end{theorem}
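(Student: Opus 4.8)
The plan is to deduce \cref{verLimits} from the machinery already assembled, treating the two parts separately but in parallel. For both parts, the starting point is \cref{regstable}, which tells us that $\{I^{\frac{n}{e}}\}_{n\in\N}$ is an asymptotically stable family; \cref{depthReg} then immediately gives that $\lim_{n\to\infty}\frac{\textnormal{reg}(I^{\frac{n}{e}})}{n}$ and $\lim_{n\to\infty}\textnormal{depth}(R/I^{\frac{n}{e}})$ both exist. So the only real content here is to identify the values of these limits in terms of the integral closure powers $\overline{I^n}$.

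For part (1), the idea is to pass to the subsequence given by indices divisible by $e$. By \cref{basics}(2) we have $I^{\frac{ne}{e}}=\overline{I^n}$, so
\begin{equation*}
\lim_{n\to\infty}\frac{\textnormal{reg}(I^{\frac{n}{e}})}{n}=\lim_{k\to\infty}\frac{\textnormal{reg}(I^{\frac{ke}{e}})}{ke}=\frac{1}{e}\lim_{k\to\infty}\frac{\textnormal{reg}(\overline{I^k})}{k},
\end{equation*}
where the first equality is valid because the full limit exists (by \cref{depthReg}), so every subsequence converges to the same value, and the inner limit on the right exists by the same reasoning applied to the known filtration $\{\overline{I^n}\}_{n\in\N}$ (or simply because it is the limit of a subsequence of a convergent sequence). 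This is essentially bookkeeping once the limit is known to exist.

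For part (2), I would again pass to the subsequence $n=ke$, so that $\lim_{n\to\infty}\textnormal{depth}(R/I^{\frac{n}{e}})=\lim_{k\to\infty}\textnormal{depth}(R/\overline{I^k})$. Now I invoke the known asymptotic behavior of the depth of integral closure powers: by Brodmann-type stability (the integral closure Rees algebra $\bigoplus_n\overline{I^n}t^n$ is Noetherian), $\textnormal{depth}(R/\overline{I^n})$ is eventually constant, and its stable value equals $d-\ell(\overline{I_\bullet})$; since $I$ and its integral closure filtration have the same analytic spread $\ell(I)$ (the fiber cones have the same dimension), this stable value is $d-\ell(I)$. Alternatively one can run the argument directly through the proof of \cref{depthReg}(2), whose $I_{n+k}=I_nI_k$ eventual-linearity and the Herzog–Hibi result already locate the limit, and then match it against $d-\ell(I)$ using that $\overline{I^{n+k}}=\overline{I^n}\,\overline{I^k}$ for $n\gg0$.

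The main obstacle is the identification $\lim_k\textnormal{depth}(R/\overline{I^n})=d-\ell(I)$ in part (2): one must be careful that the relevant analytic spread is that of the ordinary powers, not something intrinsic to the integral closure Rees algebra, and invoke the correct form of the asymptotic depth theorem (e.g. that $\min_n\textnormal{depth}(R/J^n)$ for a filtration with Noetherian Rees algebra is governed by the dimension of the associated fiber cone). Everything else—existence of the limits and the passage to divisible subsequences—is routine given \cref{regstable}, \cref{depthReg}, and \cref{basics}(2).
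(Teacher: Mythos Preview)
Your proposal is correct and follows essentially the same route as the paper: use \cref{regstable} and \cref{depthReg} to get existence, then pass to the subsequence $n=ke$ via $I^{\frac{ke}{e}}=\overline{I^k}$ to identify the limits. The only difference is in part~(2): where you sketch a Brodmann/fiber-cone argument to reach $d-\ell(I)$ (and rightly flag the analytic-spread identification as the delicate point), the paper simply cites \cite[Lemma~1.5]{Trung18}, which gives $\lim_{n}\textnormal{depth}(R/\overline{I^n})=d-\ell(I)$ directly for monomial ideals.
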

\begin{proof}
From \cref{regstable} and \cref{depthReg}, both of these limits exist. 
For the regularity, notice that for $n\!\in\!\N$ that 
\[
\frac{\textnormal{reg}(\overline{I^n})}{n}\!=\!\frac{\textnormal{reg}(I^{\frac{ne}{e}})e}{ne}\!=\!e\frac{\textnormal{reg}(I^{\frac{ne}{e}})}{ne}
\]
so that if $\lim\limits_{n \to \infty}\frac{\textnormal{reg}(I^{\frac{n}{e}})}{n}\!=\!p_e$ and $\lim\limits_{n \to \infty}\frac{\textnormal{reg}(\overline{I^n})}{n}\!=\!p$ then $ep_e\!=\!p$. From this equality, $p_e\!=\!\frac{1}{e}p_1$, finishing (1).

For depth, notice that from the correspondence $\overline{I^n}\!=\!I^{\frac{ne}{e}}$, $\{\textnormal{depth}(R/\overline{I^n})\}_{n\in\N}$ appears as subsequences of the $I^{\frac{n}{e}}$ depth sequence. Hence both sequences have the same limit and so by \cite[Lemma 1.5]{Trung18}, we have finished (2).
\end{proof}

Notice due to the connection with symbolic powers of squarefree monomial ideals, we can conclude a convex-geometric computation of the symbolic analytic spread.

\begin{remark}
\label{symSpread}
Let $I$ be a squarefree monomial ideal. Then there exists a $g\!\in\!\N$ such that the symbolic analytic spread $\ell_s(I)\!=\!\ell(I^{(g)})$.
Notice that this allows us to have $\ell_s(I)$ computed by using the Symbolic Polyhedron.
To see this, notice that from \cref{sqfreeInt} we have that such a $g$ exists. Then the rational powers of $I^{(g)}$ are the symbolic powers of $I$. By \cref{verLimits} we have that
\[
\lim\limits_{n \to \infty}\textnormal{depth}(R/I^{(n)})\!=\!\lim\limits_{n \to
\infty}\textnormal{depth}(R/(I^{(g)})^{\frac{n}{g}})\!=\!d-\ell(I^{(g)}).
\]
On the other hand, from \cite[Theorem 3.6]{Fak20}
\[
\lim\limits_{n \to \infty}\textnormal{depth}(R/I^{(n)})\!=\!d-\ell_s(I).
\]
and thus $\ell_s(I)\!=\!\ell(I^{(g)})$. 

Then, from \cite[Theorem 2.3]{B-A03} we can compute $\ell(I^{(g)})$ by the maximal dimension of a compact face of $\textnormal{NP}(I^{(g)})$ plus one. However, as we have seen, $\textnormal{NP}(I^{(g)})$ is a multiple of the symbolic polyhedron, $\textnormal{SP}(I)$, and thus, since taking multiples of a convex hull does not change the dimension of the faces, we can look at the dimension of the faces of $\textnormal{SP}(I)$ to compute $\ell(I^{(g)})\!=\!\ell_s(I)$.
\end{remark}

Notice also that \cref{verLimits} allows us to use the connection between the rational and integral powers. We can now use this connection for Stanley Depth. Most of the study of Stanley depth of integral closure powers has been limited (e.g. \cite{Fak19}), but this connection allows us to conclude new facts about integral closure powers of monomial ideals. We now prove \cref{sdepthRP}.

\begin{theorem}
\label{sdepth}
If $I$ is a monomial ideal and $e\!=\!\textnormal{lcm}\{v(I)\,|\,v\!\in\!\mathcal{RV}(I)\}$. Then the limits $\lim\limits_{n \to \infty}\textnormal{sdepth}(R/I^{\frac{n}{e}})$ and $\lim\limits_{n \to \infty}\textnormal{sdepth}(I^{\frac{n}{e}})$ exist. In particular, these limits must exist for $\{\overline{I^n}\}_{n\in\N}$. Furthermore:
\begin{equation*}
\begin{split}
    \lim\limits_{n \to \infty}\textnormal{sdepth}(R/I^{\frac{n}{e}})\!=\!\min_n\textnormal{sdepth}(R/\overline{I^n})\textnormal{, and} \\
    \lim\limits_{n \to \infty}\textnormal{sdepth}(I^{\frac{n}{e}})\!=\!\min_n\textnormal{sdepth}(\overline{I^n}).
\end{split}
\end{equation*}
\end{theorem}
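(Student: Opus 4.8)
The plan is to adapt the argument of \cite{Fak20} for symbolic powers of squarefree monomial ideals to the asymptotically stable filtration of rational powers, using the splitting maps $\iota$ that we have already constructed. The key structural input is that rational powers form an asymptotically stable filtration (\cref{regstable}), so in particular the associated Rees algebra $\bigoplus_{n}I^{\frac{n}{e}}t^n$ is Noetherian, and the split injections $\iota: R/I^{\frac{n+1}{e}}\to R^{1/m_f}/(I^{\frac{nm_f+j}{e}})^{1/m_f}$ hold for an unbounded sequence $\{m_f\}$, all $n$, and $1\leqslant j\leqslant m_f$. The first step is to record the Stanley-depth analogue of \cref{ineqLem}: because a Stanley decomposition of a module pushes forward along the ring isomorphism $R\cong R^{1/m_f}$, and because a direct summand (as $\Z^d$-graded modules) has Stanley depth at least that of the ambient module, the split injection $\iota$ yields
\[
\textnormal{sdepth}\big(R/I^{\frac{n+1}{e}}\big)\leqslant\textnormal{sdepth}\big(R/I^{\frac{nm_f+j}{e}}\big)
\]
and likewise for the ideals $I^{\frac{n}{e}}$ in place of $R/I^{\frac{n}{e}}$. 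The inequality on summands is the one used in \cite{Fak20}; one should cite that monotonicity of Stanley depth under $\Z^d$-graded direct summands.

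Next I would show the sequences $\textnormal{sdepth}(R/I^{\frac{n}{e}})$ and $\textnormal{sdepth}(I^{\frac{n}{e}})$ are eventually periodic. This follows from Noetherianity of the Rees algebra exactly as in the depth argument of \cref{depthReg}: there is $k$ with $I^{\frac{n+k}{e}}=I^{\frac{n}{e}}I^{\frac{k}{e}}$ for $n\geqslant k$, and by the Stanley-depth analogue of the Brodmann-type stabilization used in \cite{Fak20} (for each residue class $j$ mod $k$, $\textnormal{sdepth}(R/(I^{\frac{k}{e}})^nI^{\frac{k+j}{e}})$ and $\textnormal{sdepth}((I^{\frac{k}{e}})^nI^{\frac{k+j}{e}})$ are eventually constant in $n$) each sequence is eventually periodic with period dividing $k$. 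Combining eventual periodicity with the inequality from the previous step in the same way as \cref{depthReg}(2): let $d=\min_n\textnormal{sdepth}(R/I^{\frac{n}{e}})$, attained at index $l$; choose $f$ with $m_f$ large enough that $m_f(l-1)+i$ meets every residue class $j$ mod $k$ in the eventually-constant range; then $d=\textnormal{sdepth}(R/I^{\frac{l}{e}})\geqslant\textnormal{sdepth}(R/I^{\frac{m_f(l-1)+i}{e}})$ forces the eventual constant value in each class to equal $d$, so the limit exists and equals $\min_n\textnormal{sdepth}(R/I^{\frac{n}{e}})$. The identical argument with $I^{\frac{n}{e}}$ replacing $R/I^{\frac{n}{e}}$ gives $\lim\textnormal{sdepth}(I^{\frac{n}{e}})=\min_n\textnormal{sdepth}(I^{\frac{n}{e}})$.

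Finally, to pass to integral closure powers: by \cref{basics}(2), $\overline{I^n}=I^{\frac{ne}{e}}$, so $\{\textnormal{sdepth}(R/\overline{I^n})\}_n$ and $\{\textnormal{sdepth}(\overline{I^n})\}_n$ are subsequences of the convergent sequences just treated, hence they converge to the same limits; and since $\min_n\textnormal{sdepth}(R/\overline{I^n})$ is a minimum over a subsequence it a priori only satisfies $\min_n\textnormal{sdepth}(R/\overline{I^n})\geqslant\min_n\textnormal{sdepth}(R/I^{\frac{n}{e}})$ — to get equality one notes that the limit of the full sequence equals $\min_n\textnormal{sdepth}(R/I^{\frac{n}{e}})$ and the subsequence $\overline{I^n}$ has the same limit, and that the eventually-constant tail value is attained already among the $\overline{I^n}$ (take $n$ in the tail with $e\mid n$), so $\min_n\textnormal{sdepth}(R/\overline{I^n})$ equals that common limit; the same for ideals. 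The main obstacle is the Stanley-depth stabilization input: unlike depth, there is no Auslander–Buchsbaum/Brodmann theorem for Stanley depth, so one must either invoke the relevant eventual-periodicity result from \cite{Fak20} verbatim (which is stated for products of the form $(I^{k})^nI^{k+j}$ and transfers directly since $I^{\frac{k}{e}}$ is an honest monomial ideal) or reprove it; everything else is a routine transcription of the depth argument with $\textnormal{depth}$ replaced by $\textnormal{sdepth}$ and "direct summand of local cohomology vanishes" replaced by "Stanley depth of a $\Z^d$-graded summand is $\geqslant$ that of the module."
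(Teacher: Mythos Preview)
Your approach diverges from the paper's and contains two genuine gaps.

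First, the claim that ``Stanley depth of a $\Z^d$-graded direct summand is $\geqslant$ that of the ambient module'' is not a known general fact; only the easy direction $\textnormal{sdepth}(M\oplus N)\geqslant\min\{\textnormal{sdepth}(M),\textnormal{sdepth}(N)\}$ is available. Fakhari's inequalities in \cite{Fak17,Fak20} are \emph{not} consequences of an abstract summand principle: they take as hypothesis a specific combinatorial statement of the form ``$f\in J$ if and only if $f^{k}\in J'$ for monomials $f$'' and produce the sdepth inequality from that. The splitting $\iota$ by itself does not give what you need. (You also reversed the inequality in your first display relative to how you use it later.)

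Second, and more seriously, the eventual-periodicity step fails. You invoke a ``Stanley-depth analogue of Brodmann stabilization'' for products $(I^{\frac{k}{e}})^nI^{\frac{k+j}{e}}$, but no such result exists: if Noetherianity of a Rees algebra implied eventual periodicity of Stanley depth, Herzog's conjecture for ordinary powers would be settled for all monomial ideals, since the ordinary Rees algebra is always Noetherian. The paper in fact advertises the integral-closure case as a new confirmation of that conjecture. Reference \cite{Fak20} does not supply the input you need here.

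The paper's proof avoids both issues by working directly with the valuation hyperplanes rather than the splitting maps. It shows, for monomials $f$, the equivalence $f\in I^{\frac{m}{e}}\Longleftrightarrow f^{k+1}\in I^{\frac{km+j}{e}}$ whenever $k\leqslant m$ and $m-k\leqslant j\leqslant m$, using that each $w_i=\tfrac{e}{v_i(I)}v_i$ has integer coefficients; this is exactly the hypothesis of \cite[Theorem~4.2]{Fak20}, yielding $\textnormal{sdepth}(R/I^{\frac{m}{e}})\geqslant\textnormal{sdepth}(R/I^{\frac{km+j}{e}})$. A second equivalence $f\in I^{\frac{s}{e}}\Longleftrightarrow f^{k}\in I^{\frac{ks}{e}}$ feeds \cite[Theorem~3.1]{Fak17}. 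These two inequalities alone, with a short elementary argument (take $t$ minimal with $\textnormal{sdepth}(R/I^{\frac{t}{e}})$ equal to the global minimum, and show every $k>t^2-t$ can be written as $lt+j$ with $l\geqslant t-1$ and $1\leqslant j\leqslant t$), force the sequence to be eventually constant at its minimum---no Noetherianity or periodicity is used. The passage to $\overline{I^n}$ via the subsequence $\overline{I^n}=I^{\frac{ne}{e}}$ is as you describe.
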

\begin{proof}
Using \cref{MVPCrit}, suppose that the Rees valuations of $I$ are $v_1,\dots,v_r$, let $w_i\!=\!\frac{e}{v_i(I)}v_i$ for each $1\!\leqslant\! i\!\leqslant\! r$. 

For ease of notation, let $f$ denote a monomial of $R$. Using \cite[Lemma 4.1, Theorem 4.2]{Fak20} let $m\!\in\!\N$ and $k\!\leqslant\! m$, then for $j$ with $m-k\!\leqslant\! j\!\leqslant\! m$ we claim that $f\!\in\! I^{\frac{m}{e}}$ if and only if $f^{k+1}\!\in\! I^{\frac{km+j}{e}}$. Indeed, from \cref{basics}, $f\!\in\! I^{\frac{m}{e}}$ implies $f^{k+1}\!\in\! (I^{\frac{m}{e}})^{k+1}\subseteq I^{\frac{m(k+1)}{e}}\subseteq I^{\frac{km+j}{e}}$. 

Conversely, assume $f^{k+1}\!\in\! I^{\frac{km+j}{e}}$ and $f\notin I^{\frac{m}{e}}$. Then there is an $1\!\leqslant\! i\!\leqslant\! r$ such that $w_i(f)\!<\!m$. Since, by \cref{hyperP} and \cref{MVPCrit}, we can consider $w_i$ as a hyperplane equation with coefficients in $\N$ we must have $w_i(f)\!\leqslant\! m-1$. Hence
\[
w_i(f^{k+1})\!\leqslant\! (k+1)(m-1)\!=\!km+m-k-1\!<\!km+(m-k)\!\leqslant\! km+j
\]
which implies that $f^{k+1}\notin I^{\frac{km+j}{e}}$ a contradiction; proving the equivalence. 

Thus, by \cite[Theorem 4.2]{Fak20}, we have
\begin{equation}
\label{sdLin}
\textnormal{sdepth}(I^{\frac{m}{e}})\!\geqslant\!\textnormal{sdepth}(I^{\frac{km+j}{e}})\textnormal{ and } \textnormal{sdepth}(R/I^{\frac{m}{e}})\!\geqslant\!\textnormal{sdepth}(R/I^{\frac{km+j}{e}}).
\end{equation}
 
Similarly, for any $s,k\!\in\!\N$ we have $f\!\in\! I^{\frac{s}{e}}$ if and only if $f^s\!\in\! I^{\frac{ks}{e}}$ by the monomial valuation inequalities. Indeed, $w_i(f)\!\geqslant\!\frac{s}{e}$ for each $1\!\leqslant\! i\!\leqslant\! r$ if and only if $w_i(f^k)\!\geqslant\!\frac{sk}{e}$ for each $1\!\leqslant\! i\!\leqslant\! r$ since valuations are a multiplicative group homomorphism. Thus, from \cite[Theorem 3.1]{Fak17} we must have
\begin{equation}
    \label{sdepthM}
    \textnormal{sdepth}(R/I^{\frac{ks}{e}})\!\leqslant\!\textnormal{sdepth}(R/I^{\frac{s}{e}}) \textnormal{ and } \textnormal{sdepth}(I^{\frac{ks}{e}})\!\leqslant\!\textnormal{sdepth}(I^{\frac{s}{e}})
\end{equation}

Set $m\!=\!\min\{\textnormal{sdepth}(R/I^{\frac{k}{e}})\,|\,k\!\in\!\N\}$ and $t\!=\!\min\{s\,|\,\textnormal{sdepth}(R/I^{\frac{s}{e}})\!=\!m\}$. 
If $t\!=\!1$ then from \cref{sdepthM} we have 
\[
\textnormal{sdepth}(R/I^{\frac{1}{e}})\!\leqslant\!\textnormal{sdepth}(R/I^{\frac{k}{e}})\!=\!\textnormal{sdepth}(R/I^{\frac{k\cdot1}{e}})\!\leqslant\!\textnormal{sdepth}(R/I^{\frac{1}{e}})
\]
proving the limit. So, suppose that $t\!>\!1$. Then $\textnormal{sdepth}(R/I^{\frac{t^2-t}{e}})\!\leqslant\!\textnormal{sdepth}(R/I^{\frac{t}{e}})\!=\!m$ again from \cref{sdepthM}. For every $k\!>\!t^2-t$ write $k\!=\!lt+j$ with $1\!\leqslant\! j\!\leqslant\! t$. Then notice that $l\!\geqslant\! t-1$ since $lt+j\!>\!t^2-t$. Then by \cref{sdLin} we have that
\[
\textnormal{sdepth}(R/I^{\frac{k}{e}})\!=\!\textnormal{sdepth}(R/I^{\frac{lt+j}{e}})\!\leqslant\!\textnormal{sdepth}(R/I^{\frac{t}{e}})\!=\!m
\]
and thus $\textnormal{sdepth}(R/I^{\frac{k}{e}})\!=\!m$ for all $k\!>\!t^2-t$, proving the convergence. The same argument then holds for $\textnormal{sdepth}(I^{\frac{n}{e}})$ as well.

For $\{\overline{I^n}\}_{n\in\N}$ and the limit equalities, notice that the correspondence $\overline{I^n}\!=\!I^{\frac{ne}{e}}$ yields $\{\textnormal{sdepth}(R/\overline{I^n})\}_{n\in\N}$ as a subsequence of the sequence $\{\textnormal{sdepth}(R/I^{\frac{n}{e}})\}_{n\in\N}$. Hence both sequences have the same limit, and thus
\[
\lim\limits_{n \to \infty}\textnormal{sdepth}(R/I^{\frac{n}{e}})\!=\!\min_n\textnormal{sdepth}(R/I^{\frac{n}{e}})\!\leqslant\!\min_n\textnormal{sdepth}(R/\overline{I^n})\!\leqslant\!\lim\limits_{n \to \infty}\textnormal{sdepth}(R/\overline{I^n})
\]
completes the proof.
\end{proof}

We can use the fact that rational powers are integrally closed to conclude other desirable properties of a filtration, for example, the finiteness of associated primes. We will present this argument in full generality.

\begin{proposition}
\label{assFilt}
If $\{I_n\}_{n\in\N}$ is a filtration whose associated Rees algebra is Noetherian and such that $I_n$ is integrally closed for all $n\!\gg\! 0$, then there exists an $m\!\in\!\N$ with $\textnormal{Ass}(R/I_n)\subseteq\textnormal{Ass}(R/I_m)$ for all $n\!\geqslant\! m$. Hence $\cup_{n\in\N}\textnormal{Ass}(R/I_n)$ is finite.
\end{proposition}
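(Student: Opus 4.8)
The key structural input is that the Rees algebra $\mathcal{R} = \bigoplus_{n\in\N} I_n t^n \subseteq R[t]$ is Noetherian. The plan is to exploit this in two ways: first to get a ``periodic multiplicative stability'' of the filtration, and second to compare associated primes of the $I_n$ with those of a single well-chosen ideal using the integral closure hypothesis. First I would note that since $\mathcal{R}$ is a finitely generated $R$-algebra, there is a $k\in\N$ so that $\mathcal{R}$ is generated in degrees $\leqslant k$; consequently $I_{n+k} = \sum_{j=1}^{k} I_{n+k-j} I_j$ for $n \gg 0$, and in particular for each residue $0 \leqslant j \leqslant k-1$ the ideals $I_{nk+j}$ (for large $n$) are ``powers-up-to-lower-order'' of $I_k$, so that by the general theory of associated primes of graded families (e.g. the standard argument that $\mathrm{Ass}(R/I_{nk+j})$ stabilizes along each arithmetic progression, using that $\bigoplus_n I_{nk+j} t^n$ is a finitely generated module over the Veronese $\bigoplus_n I_{nk} t^{nk}$) we obtain that $\mathrm{Ass}(R/I_n)$ is eventually periodic in $n$ with period $k$.

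Next I would handle the finitely many periodic classes. For each residue $j$, let $A_j \subseteq \N$ be the (eventually all, within that progression) indices for which $\mathrm{Ass}(R/I_n)$ has reached its stable value $\mathcal{A}_j$. The goal is to absorb all these $\mathcal{A}_j$ into the associated primes of a single $I_m$. Here is where integral closedness enters: since $I_n$ is integrally closed for $n \gg 0$, and products of integrally closed ideals interact well, I would use the inclusion $I_{n+1} \subseteq I_n$ together with a comparison like $I_1 I_n \subseteq I_{n+1}$ (valid since $\{I_n\}$ is a filtration with $I_1 = $ the degree-one piece) to force containment of associated primes. More precisely, the standard fact (see e.g. the argument underlying persistence of associated primes for integral closures, as in \cite{HSInt} Ch.~10 or the Ratliff-type results in \cite{Rat79}) is that for an integrally closed ideal in a filtration with Noetherian Rees algebra one has $\mathrm{Ass}(R/I_n) \subseteq \mathrm{Ass}(R/I_{n'})$ whenever $n' \geqslant n$ and $n' \equiv 0$ modulo a suitable multiple; chaining this with the periodicity, I can pick $m$ large and divisible enough (a common multiple of $k$ and the thresholds) so that $\mathrm{Ass}(R/I_n) \subseteq \mathrm{Ass}(R/I_m)$ for all $n \geqslant m$. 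The finiteness of $\bigcup_{n\in\N}\mathrm{Ass}(R/I_n)$ then follows immediately: the tail is contained in the finite set $\mathrm{Ass}(R/I_m)$, and only finitely many $I_n$ with $n < m$ remain, each contributing a finite set.

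\textbf{Main obstacle.} The delicate point is establishing the persistence statement $\mathrm{Ass}(R/I_n) \subseteq \mathrm{Ass}(R/I_{n+1})$ (or at least along cofinal subsequences), which is where integral closedness is genuinely needed rather than just Noetherianity — for arbitrary filtrations associated primes need not be monotone. I would pin this down by the following argument: let $\p \in \mathrm{Ass}(R/I_n)$, so $\p = (I_n : x)$ for some $x$; localize at $\p$, use that $I_{n+1} R_\p \subseteq I_n R_\p$ and that the associated graded / Rees structure at $\p$ is still Noetherian, and invoke the result that for an integrally closed ideal $\p$ is associated to $I_{n+1}$ precisely when the corresponding Rees valuation (or exceptional divisor) condition is met — the integral closure being realized via the normalized blowup means $\mathrm{Ass}$ is detected by the finitely many Rees valuations, which are shared across the filtration and hence propagate. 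If a fully general persistence statement is not available in this generality, an acceptable fallback is to prove directly that along each arithmetic progression $nk+j$ the sets $\mathrm{Ass}(R/I_{nk+j})$ are not merely eventually constant but eventually contained in $\mathrm{Ass}(R/I_{Nk+j})$ for a single large $N$, which suffices after taking $m$ to be a common value; this only uses the Noetherian module structure over the Veronese and not a delicate persistence lemma, with integral closedness entering only to guarantee $I_m$ itself is well-behaved enough that the finitely many sub-$m$ terms cannot escape.
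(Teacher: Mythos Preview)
Your plan is in the right neighborhood, but the crucial persistence step remains a genuine gap. You correctly identify that Noetherianity of the Rees algebra gives periodic structure and that integral closedness must supply some monotonicity of $\textnormal{Ass}$, but your justification for the latter (``invoke the result that \ldots\ the Rees valuation condition is met'', ``the integral closure being realized via the normalized blowup means $\textnormal{Ass}$ is detected by the finitely many Rees valuations, which are shared across the filtration'') is not a proof: different $I_n$ in the filtration can have entirely different sets of Rees valuations, and there is no general persistence $\textnormal{Ass}(R/I_n)\subseteq\textnormal{Ass}(R/I_{n+1})$ for filtrations of integrally closed ideals. Your fallback of relying only on periodicity would at best yield finiteness of $\bigcup_n\textnormal{Ass}(R/I_n)$, but not the stronger claim that all tail sets sit inside a \emph{single} $\textnormal{Ass}(R/I_m)$, which is what the proposition asserts.

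The paper closes this gap with one clean observation you are missing. From Noetherianity one gets not merely generation in bounded degree but the sharper consequence \cite[2.4.4]{Rat79} that $I_{cn}=I_c^{\,n}$ for all $c\gg 0$ and all $n\geqslant 0$. Combined with the hypothesis that $I_k$ is integrally closed for large $k$, this forces $I_{cn}=\overline{I_c^{\,n}}$. Now the question is reduced to integral closure powers of a \emph{single} ideal, where the classical Ratliff theorem \cite[6.8.8]{HSInt} says $\{\textnormal{Ass}(R/\overline{J^n})\}_n$ is increasing and stabilizes. In particular, for any large $r$ one has $\textnormal{Ass}(R/I_r)=\textnormal{Ass}(R/\overline{I_r})\subseteq\textnormal{Ass}(R/\overline{I_r^{\,n_r}})=\textnormal{Ass}(R/I_{rn_r})$, and a short index chase through common multiples lands everything inside one fixed $\textnormal{Ass}(R/I_{mn_m})$. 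So the missing key is not an abstract persistence principle for filtrations, but the identification $I_{cn}=\overline{I_c^{\,n}}$ that grants direct access to the known monotonicity for integral closures of powers of one ideal.
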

\begin{proof}
By \cite[2.4.4]{Rat79}, for all $c\!\gg\!0$ and $n\!\geqslant\!0$ we have that $I_{cn}\!=\!I_c^n$. Now let $m\!\gg\!0$ such that the preceding property holds and that $I_k$ is integrally closed for all $k\!\geqslant\! m$. Then for $n_m\!\gg\!0$ we have
\[
\textnormal{Ass}(R/I_{mn})\!=\!\textnormal{Ass}(R/I_m^n)\!=\!\textnormal{Ass}(R/\overline{I_m^n})\!=\!\textnormal{Ass}(R/\overline{I_m^{n_m}})\!=\!\textnormal{Ass}(R/I_{mn_m})
\]
for all $n\!\geqslant\! n_m$ by the stabilizing of associated primes for integral closure powers from \cite[6.8.8]{HSInt}. Now let $r\!\geqslant\! mn_m$, then as $I_r$ is integrally closed we have
\[
\textnormal{Ass}(R/I_r)\subseteq\textnormal{Ass}(R/\overline{I_r^{n_r}})\!=\!\textnormal{Ass}(R/I_{rn_r})\!=\!\textnormal{Ass}(R/I_{rn_rmn_m})\!=\!\textnormal{Ass}(R/I_{mn_m})
\]
finishing the proof.
\end{proof}

Clearly, we can apply \cref{regstable} to rational powers as follows.

\begin{corollary}
\label{assInt}
Let $I$ be a monomial ideal. Then $\cup_{n\in\N}\textnormal{Ass}(R/I^{\frac{n}{e}})$ is finite with $e\!=\!\textnormal{lcm}\{v(I)\,|\,v\!\in\!\mathcal{RV}(I)\}$.
\end{corollary}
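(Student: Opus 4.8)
The plan is to simply combine the two results the corollary explicitly invokes. First I would observe that \cref{regstable} establishes that the rational powers $\{I^{\frac{n}{e}}\}_{n\in\N}$ form an asymptotically stable filtration; in particular, its first defining property guarantees that the associated Rees algebra $\bigoplus_{n\in\N} I^{\frac{n}{e}} t^n$ is Noetherian. Second I would recall from \cref{basics}(4) that every rational power $I^{\alpha}$ is integrally closed, so in particular $I^{\frac{n}{e}}$ is integrally closed for all $n\in\N$ (not just $n\gg 0$, which is all that is needed).

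With these two facts in hand, the filtration $\{I^{\frac{n}{e}}\}_{n\in\N}$ satisfies precisely the hypotheses of \cref{assFilt}: a filtration with Noetherian associated Rees algebra all of whose terms are integrally closed. Applying \cref{assFilt} to this filtration yields an $m\in\N$ with $\textnormal{Ass}(R/I^{\frac{n}{e}})\subseteq\textnormal{Ass}(R/I^{\frac{m}{e}})$ for all $n\geqslant m$, and hence $\cup_{n\in\N}\textnormal{Ass}(R/I^{\frac{n}{e}})$ is finite, which is exactly the claim. One small bookkeeping point worth a line: \cref{assFilt} is stated for a general filtration $\{I_n\}_{n\in\N}$ with $I_0 = R$, and one should note that the rational-powers filtration indeed has $I^{\frac{0}{e}} = R$ and is a genuine filtration by \cref{basics}, as already recorded in the proof of \cref{regstable}.

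I do not expect any real obstacle here — the corollary is essentially an immediate specialization. The only thing to be careful about is making sure the indexing conventions match: \cref{assFilt} wants a filtration indexed by $\N$ with the submultiplicativity $I_n I_k \subseteq I_{n+k}$, and this is exactly what \cref{basics}(6) gives after reindexing via $e$. So the proof will be three or four sentences: invoke \cref{regstable} for Noetherianity, invoke \cref{basics}(4) for integral closedness, and invoke \cref{assFilt} to conclude.

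\begin{proof}
By \cref{regstable}, the rational powers $\{I^{\frac{n}{e}}\}_{n\in\N}$ form an asymptotically stable filtration; in particular, the associated Rees algebra $\bigoplus_{n\in\N} I^{\frac{n}{e}} t^n$ is Noetherian. Moreover, by \cref{basics}(4), each rational power $I^{\frac{n}{e}}$ is integrally closed. Hence the filtration $\{I^{\frac{n}{e}}\}_{n\in\N}$ satisfies the hypotheses of \cref{assFilt}, and we conclude that $\cup_{n\in\N}\textnormal{Ass}(R/I^{\frac{n}{e}})$ is finite.
\end{proof}
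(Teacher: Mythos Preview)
Your proposal is correct and takes essentially the same approach as the paper: the corollary is stated immediately after \cref{assFilt} with only the sentence ``Clearly, we can apply \cref{regstable} to rational powers as follows,'' and your proof makes explicit exactly the intended application---Noetherianity of the Rees algebra from \cref{regstable} together with integral closedness from \cref{basics}(4) feed directly into \cref{assFilt}.
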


We next show the existence of the limit of the lengths of local cohomology modules. In order to do this, we will be using the method of \cite{DaoMon17}. First we must establish some lemmas that will allow us to use their results.

\begin{lemma}
\label{localRat}
For a commutative ring $A$, multiplicatively closed set $W\subseteq A$, ideal $I\subseteq A$, and $\alpha\!\in\!\Q_+$, we have $W^{-1}I^{\alpha}\!=\!(W^{-1}I)^{\alpha}$.
\end{lemma}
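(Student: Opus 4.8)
The plan is to reduce the statement about rational powers to the corresponding fact about integral closure, which commutes with localization. First I would use the defining property of the rational power: for $b$ any positive integer clearing the denominator of $\alpha$, we have $x \in I^{\alpha}$ if and only if $x^{b} \in \overline{I^{\lceil b\alpha \rceil}}$ (more precisely $x^b \in \overline{I^a}$ for $\alpha = a/b$). The key input is that localization commutes with integral closure of ideals: for a multiplicatively closed set $W$ and an ideal $J$, one has $W^{-1}\overline{J} = \overline{W^{-1}J}$ (this is standard, e.g.\ \cite[Proposition 1.1.4]{HSInt}). I would also use that $W^{-1}(J^{a}) = (W^{-1}J)^{a}$ for ordinary powers, which is immediate.

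The main steps, in order: (i) fix $\alpha = a/b \in \Q_+$ with $a,b \in \N$; (ii) show the inclusion $W^{-1}I^{\alpha} \subseteq (W^{-1}I)^{\alpha}$ by taking a generator $x/w$ with $x \in I^{\alpha}$, $w \in W$, noting $x^{b} \in \overline{I^{a}}$, hence $(x/w)^{b} = x^b/w^b \in W^{-1}\overline{I^{a}} = \overline{W^{-1}I^{a}} = \overline{(W^{-1}I)^{a}}$, so $x/w \in (W^{-1}I)^{\alpha}$ by definition; (iii) for the reverse inclusion, take $y \in (W^{-1}I)^{\alpha}$, so $y^{b} \in \overline{(W^{-1}I)^{a}} = W^{-1}\overline{I^{a}}$, write $y = x/w$ in lowest terms with $x \in A$, $w \in W$; then $y^b = x^b/w^b$ lies in $W^{-1}\overline{I^a}$, so there is $w' \in W$ with $w' x^{b} \in \overline{I^{a}}$; since $\overline{I^a}$ is integrally closed and $w'x^b$, $x^b$ differ by the unit-up-to-$W$ element $w'$, I would instead argue directly: $(w'x)^b = (w')^{b-1}(w' x^b) \in \overline{I^a}$ shows $w' x \in I^{\alpha}$, whence $y = (w'x)/(w'w) \in W^{-1}I^{\alpha}$.

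The one delicate point — and the step I expect to be the main obstacle — is the passage in (iii) from "$y^b \in W^{-1}\overline{I^a}$" to producing an honest element of $I^{\alpha}$ in the numerator: one must clear the denominator correctly and make sure the resulting power lands in $\overline{I^a}$ rather than merely in $W^{-1}\overline{I^a}$. The trick above ($w'x$ in place of $x$, using that $(w'x)^b = (w')^{b-1}\cdot (w' x^b)$ and $w' x^b \in \overline{I^a}$, together with $\overline{I^a}$ being an ideal) handles this cleanly without needing $W$ to consist of nonzerodivisors. I would remark that well-definedness of $I^{\alpha}$ (\cref{basics}(1)) guarantees the argument is independent of the chosen representation $\alpha = a/b$, so the two inclusions together give $W^{-1}I^{\alpha} = (W^{-1}I)^{\alpha}$ for every $\alpha \in \Q_+$.
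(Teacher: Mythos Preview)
Your proof is correct and follows essentially the same approach as the paper: both directions use that localization commutes with integral closure (\cite[Proposition 1.1.4]{HSInt}), and the reverse inclusion hinges on the identical trick of clearing the denominator and observing $(w'x)^b = (w')^{b-1}(w'x^b) \in \overline{I^a}$. The only cosmetic difference is that the paper first records a general persistence statement (rational powers are preserved under any ring homomorphism $\phi:A\to S$) and then specializes to the localization map, whereas you argue the forward inclusion directly.
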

\begin{proof}
Write $\alpha\!=\!\frac{p}{q}$ for coprime $p,q\!\in\!\N$. We will use \cite[Proposition 1.1.4]{HSInt} and their argument for persistence.
We will first show that taking rational powers is persistent, that is, if $\phi:A\to S$ is a ring homomorphism to some commutative ring $S$, then $\phi(I^{\alpha})\subseteq(\phi(I)S)^{\alpha}$. Indeed if $s\!\in\!\phi(I^{\alpha})$, then $s\!=\!\phi(r)$ for some $r\!\in\! I^{\alpha}$. Then there is an integral dependence equation of $r^q$ over $I^p$ since $r^q\!\in\!\overline{I^p}$. Then applying $\phi$ to the equation gives an integral dependence equation of $\phi(r^q)\!=\!s^q$ over $(\phi(I)S)^p$ so that $s\!\in\!(\phi(I)S)^{\alpha}$.

Now for the localization case, persistence yields one containment, for the other let $x\!\in\!(W^{-1}I)^{\alpha}$. Since $x^q\!\in\!\overline{(W^{-1}I)^p}\!=\!W^{-1}\overline{I^p}$ there exists a $w\!\in\! W$ with $wx^q\!\in\!\overline{I^p}$ and with $wx\!\in\! A$. Thus, $w^{q-1}wx^q\!=\!(wx)^q\!\in\!\overline{I^p}$ so that $wx\!\in\! I^{\frac{p}{q}}\!=\!I^{\alpha}$ and hence $x\!\in\! W^{-1}I^{\alpha}$, finishing the proof.
\end{proof}

Notice here that we do not need a polynomial ring for the localization, making the lemma a general result on rational powers. We now must appeal to the convex geometry of monomial ideals as used in \cite{DaoMon17}.

\begin{lemma}
\label{technical}
Let $I_1,\dots,I_r,J_1,\dots,J_s$ be monomial ideals in $R$ and $e\!\in\!\N$. For every $n\!\geqslant\! 1$ consider the set
\[
S_n\!=\!\{\ua\!\in\!\N^d\,|\,\ux^{\ua}\!\in\! I_i^{\frac{n}{e}}\textnormal{ for every }1\!\leqslant\! i\!\leqslant\! r\textnormal{, and }\ux^{\ua}\notin J_i^{\frac{n}{e}}\textnormal{ for every }1\!\leqslant\! i\!\leqslant\! s\}.
\]
Assume $|S_n|\!<\!\infty$ for $n\!\gg\!0$. Then $\lim\limits_{n \to \infty}\frac{|S_n|}{n^d}$ exists and is equal to a rational number.
\end{lemma}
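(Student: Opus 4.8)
The plan is to reduce the statement to a known quasi-polynomial growth result for Hilbert functions of finitely generated modules over a standard graded (or multigraded) algebra, after re-expressing each rational-power condition in terms of ordinary powers via the canonical indexing. First I would use \cref{basics}(5) together with \cref{MVPCrit} to rewrite membership $\ux^{\ua}\in I_i^{n/e}$ as a finite system of linear inequalities $w(\ux^{\ua})\geqslant \tfrac{n}{v(I_i)}v(I_i)$-type conditions, i.e. $w(\ua)\geqslant c_{i,w} n$ for the relevant Rees valuations $w$ of $I_i$ (and similarly for the $J_i$, where we instead need that \emph{at least one} inequality fails). Equivalently, each $I_i^{n/e}$ is the lattice-point set of a dilated rational polyhedron $n P_i$ (a translate/scaling of the Newton polyhedron of $I_i$), and the complement of $J_i^{n/e}$ is the lattice-point set of $\N^d \setminus nQ_i$. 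So $S_n = \bigl(\bigcap_i nP_i\bigr) \cap \bigl(\bigcap_i (\N^d\setminus nQ_i)\bigr)\cap\N^d$, a finite set by hypothesis.

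The main step is then a counting argument. One clean route: by inclusion–exclusion over the subsets $T\subseteq\{1,\dots,s\}$, write
\[
|S_n| = \sum_{T\subseteq\{1,\dots,s\}} (-1)^{|T|}\,\Bigl|\bigl(\textstyle\bigcap_{i=1}^r nP_i\bigr)\cap\bigl(\bigcap_{i\in T} nQ_i\bigr)\cap\N^d\Bigr|.
\]
Each term counts lattice points of a rational polyhedron dilated by $n$; when the polyhedron is bounded this is the Ehrhart quasi-polynomial of degree equal to its dimension, and when it is unbounded the finiteness of $|S_n|$ forces the relevant term to vanish identically (since an unbounded rational cone contributes infinitely many lattice points under dilation). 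Hence $|S_n|$ agrees for $n\gg 0$ with a quasi-polynomial of degree at most $d$, and $\lim_{n\to\infty} |S_n|/n^d$ equals the leading quasi-coefficient, which is a constant (the quasi-period drops out of the top coefficient for Ehrhart quasi-polynomials of dilated rational polytopes, or one argues the top coefficients of all residue classes coincide because they compute the normalized volume) and is rational. Alternatively, and more in the spirit of \cite{DaoMon17}, one realizes $\bigoplus_n (\text{span of }\ux^{\ua},\ \ua\in S_n)$ as a finitely generated $\Z$-graded module over the Noetherian Rees-type algebra built from the $I_i$ and $J_i$ (using that the associated Rees algebras of rational powers are Noetherian, as in \cref{regstable}), and then invokes the standard fact that the Hilbert function of such a module, when eventually finite-dimensional in each degree, is eventually quasi-polynomial of degree $\leqslant d$ with rational leading coefficient.

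I would carry out the steps in this order: (i) translate the defining conditions of $S_n$ into dilated rational polyhedra via \cref{basics}(5) and \cref{MVPCrit}; (ii) apply inclusion–exclusion to remove the ``$\notin J_i^{n/e}$'' conditions; (iii) for each resulting term, split into the bounded case (Ehrhart quasi-polynomial) and the unbounded case (term vanishes by the finiteness hypothesis); (iv) assemble and extract the degree-$d$ coefficient, checking it is independent of the residue class mod the common quasi-period and is rational. The main obstacle I expect is step (iv): controlling the leading coefficient across residue classes and proving its rationality cleanly — i.e. arguing that although $|S_n|$ is only quasi-polynomial, the normalization by $n^d$ kills the periodic part of the top-degree term. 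This is exactly the point where one wants to cite the relevant lemma of \cite{DaoMon17} (or the Ehrhart-theoretic fact that the leading coefficient of the Ehrhart quasi-polynomial of a rational polytope is its volume, hence constant and rational), rather than reprove it; the rest is bookkeeping with the hyperplane inequalities supplied by \cref{MVPCrit} and \cref{hyperP}.
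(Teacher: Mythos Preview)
Your overall strategy---translate membership in $I_i^{n/e}$ and $J_i^{n/e}$ into lattice-point conditions on dilations of rational polyhedra, then invoke an Ehrhart-type result or \cite[Lemma~4.1]{DaoMon17}---matches the paper's, and the paper's proof is essentially the one-line version of this: it sets $\Gamma=\bigcap_i \tfrac{1}{e}\textnormal{NP}(I_i)$, $\Gamma_i=\Gamma\cap \tfrac{1}{e}\textnormal{NP}(J_i)$, observes $S_n=nC\cap\N^d$ for $C=\Gamma\setminus\bigcup_i\Gamma_i$, notes $C$ is bounded by the finiteness hypothesis, and cites \cite[Lemma~4.1]{DaoMon17} directly. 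No inclusion--exclusion, no case split.

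Your primary route through inclusion--exclusion, however, has a genuine gap. The term with $T=\varnothing$ counts lattice points in $\bigcap_i nP_i$, an intersection of (scaled) Newton polyhedra; such a region is an up-set in $\N^d$ and is typically unbounded, so this term is infinite for every $n$. More generally several terms in your alternating sum will be infinite, and the identity you wrote is simply undefined. Your claim that ``the finiteness of $|S_n|$ forces the relevant term to vanish identically'' is not correct: finiteness of the \emph{difference} $S_n$ says nothing about finiteness of the individual summands. One can repair this (intersect everything first with a box $[0,Mn]^d$ large enough to contain $S_n$ and then run inclusion--exclusion on bounded polytopes), but that is extra work you did not indicate, and it is exactly the content packaged inside \cite[Lemma~4.1]{DaoMon17}. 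Since you already flag that lemma as the intended citation at the end, the cleanest fix is to drop the inclusion--exclusion entirely and argue as the paper does: identify $S_n$ with the lattice points of $nC$ for a fixed bounded $C$ and cite \cite{DaoMon17}.
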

\begin{proof}
Let $\Gamma:=\!\cap_{i\!=\!1}^r\frac{1}{e}\textnormal{NP}(I_i)$ and $\Gamma_i:=\!\Gamma\cap\frac{1}{e}\textnormal{NP}(J_i)$ for $1\!\leqslant\! i\!\leqslant\! s$. Furthermore, let $nC:=\!n\Gamma-\cup_{i\!=\!1}^sn\Gamma_i$. Then notice that $\ua\!\in\! S_n$ if and only if $\ua\!\in\!\frac{n}{e}\textnormal{NP}(I_i)$ for each $1\!\leqslant\! i\!\leqslant\! r$ and $\ua\notin\frac{n}{e}\textnormal{NP}(J_i)$ for each $1\!\leqslant\! i\!\leqslant\! s$. Using the new notation, $\ua\!\in\! S_n$ if and only if $\ua\!\in\! nC$. Hence, by assumption $C$ is bounded and so by \cite[Lemma 4.1]{DaoMon17} the limit exists and is rational.
\end{proof}

Before proceeding to apply this lemma, we must establish some notation from \cite{DaoMon17}. Let $[d]\!=\!\{1,\dots,d\}$ and $F\subseteq[d]$. Then we let $\pi_F:R\to R$ be the map defined by $\pi_F(x_i)\!=\!1$ if $i\!\in\! F$ and $\pi_F(x_i)\!=\!x_i$ otherwise. For an ideal $I$ we say $I_F:=\!\pi_F(I)$.

For $\ua\!\in\!\Z^d$ we let $G_{\ua}\!=\!\{i\,|\,\ua_i\!<\!0\}$ and $\ua^+\!=\!(\ua_1^+,\dots,\ua_d^+)$ where $\ua_i^+\!=\!\ua_i$ if $i\notin G_{\ua}$ and $\ua_i^+\!=\!0$ otherwise. For a monomial ideal $I$ we let $\Delta_{\ua}(I)$ be the simplicial complex of all subsets $F$ of $[d]-G_{\ua}$ such that $\ux^{\ua^+}\notin I_{F\cup G_{\ua}}$. Finally, for a monomial ideal $I$, let $\Delta(I)$ be the simplicial complex of all $F\subseteq[d]$ with $\Pi_{i\!\in\! F}x_i\notin\sqrt{I}$. We are now ready to prove \cref{lengthRP}.

\begin{theorem}
\label{lcLim}
Let $I$ be a monomial ideal and $e\!=\!\textnormal{lcm}\{v(I)\,|\,v\!\in\!\mathcal{RV}(I)\}$. Assume that $\lambda(\textnormal{H}_{\mathfrak{m}}^i(R/I^{\frac{n}{e}}))\!<\!\infty$ for $n\!\gg\!0$. Then the limit
\[
\lim\limits_{n \to \infty}\frac{\lambda(\textnormal{H}_{\mathfrak{m}}^i(R/I^{\frac{n}{e}}))}{n^d}
\]
exists and is rational. In particular, this limit must exist for $\{I^{(n)}\}_{n\in\N}$ when $I$ is squarefree. Furthermore $\lim\limits_{n \to \infty}\frac{\lambda(\textnormal{H}_{\mathfrak{m}}^i(R/I^{\frac{n}{e}}))}{n^d}\!=\!\frac{1}{e^d}\lim\limits_{n \to \infty}\frac{\lambda(\textnormal{H}_{\mathfrak{m}}^i(R/\overline{I^n}))}{n^d}$.
\end{theorem}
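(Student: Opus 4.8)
The plan is to reduce the statement to \cref{technical} by expressing the length of each local cohomology module $\textnormal{H}_{\mathfrak{m}}^i(R/I^{\frac{n}{e}})$ as a finite alternating sum of counts of lattice points of the form appearing there. First I would recall the combinatorial formula of \cite{DaoMon17} (really going back to Takayama's formula): for a monomial ideal $K$ with $\lambda(\textnormal{H}_{\mathfrak{m}}^i(R/K))<\infty$, the length is a sum, over a finite set of lattice points $\ua$ (those in a bounded box governed by the generators of $K$), of the reduced homology dimensions $\dim_{\mathbb{K}}\widetilde{H}_{i-1}(\Delta_{\ua}(K);\mathbb{K})$. The key point is that $\Delta_{\ua}(K)$ only depends on containments $\ux^{\ua^+}\in K_{F\cup G_{\ua}}$, i.e. on membership of a single monomial in finitely many contractions $K_F$ of $K$. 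Applying this to $K=I^{\frac{n}{e}}$, and using \cref{localRat} (which gives $(I^{\frac{n}{e}})_F = \pi_F(I^{\frac{n}{e}}) = (\pi_F(I))^{\frac{n}{e}} = (I_F)^{\frac{n}{e}}$, since $\pi_F$ is a localization-type map), each such membership condition $\ux^{\ua^+}\in (I_F)^{\frac{n}{e}}$ is, by \cref{basics}(5) and \cref{MVPCrit}, a condition of the form ``$\ua^+$ lies in $\frac{n}{e}\textnormal{NP}(I_F)$''.

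Next I would organize the finite index set. For each $\ua$ in the relevant bounded region, the value $\dim_{\mathbb{K}}\widetilde{H}_{i-1}(\Delta_{\ua}(I^{\frac{n}{e}});\mathbb{K})$ depends only on the ``pattern'' of which of the finitely many monomials $\ux^{\ua^+}$ lie in which ideals $(I_F)^{\frac{n}{e}}$; there are finitely many such patterns. So I can partition (for each fixed $n$, but the partition is into sets cut out by the fixed finite list of Newton polyhedra scaled by $n/e$) the lattice points into finitely many pieces $S_n^{(\ell)}$, each of the exact shape in \cref{technical}: lattice points satisfying $\ux^{\ua}\in I_i^{\frac{n}{e}}$ for $i$ in one list and $\ux^{\ua}\notin J_i^{\frac{n}{e}}$ for $i$ in another list (here the $I_i,J_i$ range over the various contractions $I_F$, and one must also incorporate the shift $\ua\mapsto\ua^+$ and the bounded-box constraints, but a shift by a bounded amount and intersecting with a fixed box does not change the existence or rationality of the normalized limit — one can absorb the box into additional half-space/Newton-polyhedron conditions or handle it directly as in \cite{DaoMon17}). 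Then $\lambda(\textnormal{H}_{\mathfrak{m}}^i(R/I^{\frac{n}{e}})) = \sum_\ell c_\ell\,|S_n^{(\ell)}|$ for fixed integer constants $c_\ell$ (the homology dimensions attached to each pattern), and \cref{technical} gives that each $\frac{|S_n^{(\ell)}|}{n^d}$ converges to a rational number; hence so does the weighted sum. The finiteness hypothesis $\lambda(\textnormal{H}_{\mathfrak{m}}^i(R/I^{\frac{n}{e}}))<\infty$ for $n\gg0$ is exactly what guarantees $|S_n^{(\ell)}|<\infty$, so \cref{technical} applies.

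For the ``in particular'' clause, apply \cref{sqfreeInt}: when $I$ is squarefree there is $g$ with $I^{(n)}=(I^{(g)})^{\frac{n}{g}}$, and the Rees-valuation index $e$ of $I^{(g)}$ divides $g$ (indeed by \cref{bigThm}(3) the symbolic filtration contains the rational-power filtration of $I^{(g)}$ as the subsequence with stride $g/e$), so the symbolic powers are a subsequence of the rational powers of $I^{(g)}$ up to a fixed stride; since the normalized length along the rational-power sequence converges, so does it along any fixed-stride subsequence, giving convergence for $\{I^{(n)}\}$. Finally, for the scaling identity $\lim\frac{\lambda(\textnormal{H}_{\mathfrak{m}}^i(R/I^{\frac{n}{e}}))}{n^d}=\frac{1}{e^d}\lim\frac{\lambda(\textnormal{H}_{\mathfrak{m}}^i(R/\overline{I^n}))}{n^d}$, use \cref{basics}(2) to note $\overline{I^n}=I^{\frac{ne}{e}}$, so
\[
\frac{\lambda(\textnormal{H}_{\mathfrak{m}}^i(R/\overline{I^n}))}{n^d}=e^d\cdot\frac{\lambda(\textnormal{H}_{\mathfrak{m}}^i(R/I^{\frac{ne}{e}}))}{(ne)^d},
\]
and since the full sequence in $n/e$ has a limit, its stride-$e$ subsequence has the same limit; dividing by $e^d$ gives the claim.

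The main obstacle I anticipate is the careful bookkeeping in the reduction step: making precise that Takayama-type length formula so that the contributing lattice points and the attached homological multiplicities assemble into \emph{finitely many} sets of the precise form demanded by \cref{technical}, uniformly in $n$ — in particular handling the passage $\ua\mapsto\ua^+$ and $G_\ua$, the bounded-box truncation, and confirming via \cref{localRat} that every relevant contraction-then-rational-power operation commutes so that all conditions are genuinely of ``$\ux^{\ua}$ in/not in a rational power of a monomial ideal'' type. Once that dictionary is set up cleanly, the convergence and rationality are immediate from \cref{technical}, and the remaining assertions are short subsequence arguments.
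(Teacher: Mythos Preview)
Your proposal is correct and follows essentially the same route as the paper: apply the Takayama-type length formula from \cite[Theorem~3.8]{DaoMon17}, use \cref{localRat} to commute the contractions $\pi_F$ with rational powers so that the conditions $\Delta_{\ua}(I^{n/e})=\Delta'$ become membership/non-membership in rational powers $(I_F)^{n/e}$, feed these into \cref{technical}, and handle the squarefree and scaling claims exactly as you do via \cref{sqfreeInt} and the subsequence $\overline{I^n}=I^{ne/e}$. The only simplification the paper makes over your sketch is that it first invokes Noetherianness of the rational-power Rees algebra (from \cref{regstable}) so as to cite \cite[Theorem~3.8]{DaoMon17} directly, which already restricts the sum to $\ua\in\N^d$ indexed by the finitely many subcomplexes $\Delta'\subseteq\Delta(I)$ and thereby absorbs the $\ua\mapsto\ua^+$, $G_{\ua}$, and bounded-box bookkeeping you flagged as the main obstacle.
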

\begin{proof}
From \cref{regstable} the associated Rees algebra of the rational powers is Noetherian, and so by the application of Takayama's formula from \cite[Theorem 3.8]{DaoMon17} we have that
\[
\lambda(\textnormal{H}_{\mathfrak{m}}^i(R/I^{\frac{n}{e}}))\!=\!\Sigma_{\Delta'\subseteq\Delta(I)}\textnormal{dim}_k\tilde{H}_{i-1}(\Delta',k)f_{\Delta'}(n)
\]
where $f_{\Delta'}(n)\!=\!|\{\ua\!\in\!\N^d\,|\,\Delta_{\ua}(I^{\frac{n}{e}})\!=\!\Delta'\}|$. We claim that for each $\Delta'\subset\Delta(I)$ such that $\tilde{H}_{i-1}(\Delta',k)\neq0$ the limit $\lim\limits_{n\to\infty}\frac{f_{\Delta'}(n)}{n^d}$ exists and is rational, which will finish the proof. By assumption we must have $f_{\Delta'}(n)\!<\!\infty$ for $n\!\gg\!0$ whenever $\tilde{H}_{i-1}(\Delta',k)\neq0$. For $\ua\!\in\!\N^d$ we have $\Delta_{\ua}(I^{\frac{n}{e}})\!=\!\Delta'$ if and only if $\ux^{\ua}\notin (I^{\frac{n}{e}})_F$ for every facet $F$ of $\Delta'$, and $\ux^{\ua}\!\in\!(I^{\frac{n}{e}})_G$ for every minimal non-face $G$ of $\Delta'$. Then by \cref{localRat} we have $(I^{\frac{n}{e}})_F\!=\!(I_F)^{\frac{n}{e}}$ and $(I^{\frac{n}{e}})_G\!=\!(I_G)^{\frac{n}{e}}$ for each $F$ and $G$. Thus, we may use \cref{technical} to show the existence of the limit.

By \cref{sqfreeInt}, the limit must exist under the same assumptions for $\{I^{(n)}\}_{n\in\N}$ when $I$ is squarefree. Notice that the correspondence $\overline{I^n}\!=\!I^{\frac{ne}{e}}$ yields the limit equality via the argument of \cref{verLimits} part (1). 
\end{proof}

\section*{Acknowledgements}
The author would like to thank his advisor, Jonathan Monta{\~n}o, for the many invaluable suggestions, comments, and discussions he provided. The author would also like to thank Louiza Fouli for her helpful comments.

\bibliographystyle{amsplain}

\providecommand{\bysame}{\leavevmode\hbox to3em{\hrulefill}\thinspace}
\providecommand{\MR}{\relax\ifhmode\unskip\space\fi MR }
\providecommand{\MRhref}[2]{%
  \href{http://www.ams.org/mathscinet-getitem?mr=#1}{#2}
}
\providecommand{\href}[2]{#2}

\end{document}